\newcommand{\blankbox}{*(Gray)\;}
\newcommand{\amp}{&}
\definecolor{c1}{HTML}{D81B60}
\definecolor{c2}{HTML}{1E88E5}
\definecolor{c4}{HTML}{004D40}
\definecolor{c3}{HTML}{FFC107}
\newtheorem{theorem}[equation]{Theorem}
\newtheorem{proposition}[equation]{Proposition}
\newtheorem{corollary}[equation]{Corollary}
\newtheorem{conjecture}[equation]{Conjecture}
\newtheorem{example}[equation]{Example}
\newtheorem{remark}[equation]{Remark}
\newtheorem{definition}[equation]{Definition}
\newenvironment{customtheorem}[1]
  {\innercustomthm}
  {\endinnercustomthm}
\newcommand{\Z}{\mathbb{Z}}
\newcommand{\BB}{\mathcal{B}}
\newcommand{\RSK}{\mathrm{RSK}}
\newcommand{\bRSK}{\mathrm{bRSK}}
\newcommand{\bitableau}[1]{\ytableausetup{boxsize=2em}\begin{ytableau}#1\end{ytableau}\ytableausetup{boxsize=normal}}
\newcommand{\bi}[2]{\resizebox{!}{1em}{$\begin{array}{c}
    #1 \\ #2
\end{array}$}}
\newcommand{\boldbi}[2]{\resizebox{!}{1em}{$\begin{array}{c}
    \mathbf{#1} \\ \mathbf{#2}
\end{array}$}}
\title{Kronecker Coefficients, Crystals, and Bitableaux}
\author{Nate Harman and Alexander N. Wilson}
\date{July 18, 2025}
\begin{document}

\maketitle

\begin{abstract}
    What might a combinatorial interpretation of the Kronecker coefficients even look like?   We introduce a class of combinatorial objects called bitableaux, which we believe are a natural candidate, and we formulate a purely combinatorial problem which if resolved would give a combinatorial interpretation of the Kronecker coefficients.  We make some partial progress on this problem -- enough to extract a combinatorial expansion for a Kronecker product of Schur functions in the monomial basis. We also explain how in this framework finding a combinatorial interpretation for Kronecker coefficients can be thought of as looking for a generalization of the RSK and dual RSK insertion algorithms. 
\end{abstract}

\section{Introduction}

The Kronecker coefficients are notoriously mysterious structure constants in the representation theory of general linear and symmetric groups.  A long-standing open problem in combinatorial representation theory is to provide a positive combinatorial interpretation for these coefficients. 
Outside of a few special cases, little progress has been made toward finding a combinatorial interpretation.   

In this paper we introduce combinatorial objects called (lexicographic) bitableaux designed to study Kronecker coefficients. Here is an example:
\begin{align*}
        \bitableau{
                \bi11 & \bi12 & \bi12 &  \bi21\\
                \bi22 & \bi22\\
                \bi31
            }
    \end{align*}
We will define lexicographic bitableaux and various quantities associated to them in section \ref{BitabIntro}. For now, though, we will just state the motivating conjecture:

\begin{conjecture}\label{mainconj}
    The Kronecker coefficients are counted by lexicographic bitableaux of given shape and weights that satisfy a pair of Yamanouchi reading word conditions.
\end{conjecture}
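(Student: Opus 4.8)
The plan is to realize the conjectured count as the number of highest-weight elements for a pair of commuting crystal structures on bitableaux, in direct analogy with the crystal-theoretic proof of the Littlewood--Richardson rule. The starting point is the classical identity
$$s_\lambda[XY] \;=\; \sum_{\mu,\nu} g_{\lambda\mu\nu}\, s_\mu(X)\, s_\nu(Y),$$
where $s_\lambda[XY]$ denotes $s_\lambda$ evaluated at the products $x_i y_j$. Reading the left-hand side combinatorially, $s_\lambda[XY]$ is exactly the generating function for lexicographic bitableaux of shape $\lambda$, graded by top weight (in the $X$ variables) and bottom weight (in the $Y$ variables). Thus the whole problem reduces to extracting the coefficient of $s_\mu(X)\,s_\nu(Y)$ from this bitableau generating function.

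First I would set up the ``$X$-side'' crystal, acting only on the top letters of a bitableau by the usual signature/bracketing rule, and check that its highest-weight elements of top weight $\mu$ are precisely the bitableaux whose top reading word is Yamanouchi of content $\mu$. Summing over these fixes the top contribution at $s_\mu(X)$ and leaves the generating function $\sum_\nu g_{\lambda\mu\nu}\, s_\nu(Y)$ in the bottom variables; expanding this in the monomial basis recovers the one-sided count $\sum_\nu g_{\lambda\mu\nu} K_{\nu\beta}$, which is the monomial-basis expansion of the Kronecker product already in hand. The real content of the conjecture is therefore to promote this symmetric function in $Y$ from its monomial expansion to its Schur expansion.

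To do that I would construct a second, ``$Y$-side'' crystal acting on the bottom letters, and prove that the two crystals commute. Given commutation, a connected bicomponent is forced to be isomorphic to a tensor product $B(\mu)\otimes B(\nu)$ of highest-weight crystals, its doubly-highest-weight vertex is a bitableau that is simultaneously top-Yamanouchi of content $\mu$ and bottom-Yamanouchi of content $\nu$, and the number of such vertices equals the multiplicity of $s_\mu(X)\,s_\nu(Y)$, namely $g_{\lambda\mu\nu}$. This is exactly the pair of Yamanouchi reading-word conditions appearing in the statement.

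The hard part will be the construction and commutation of the bottom crystal. The naive bottom bracketing operators need not preserve either the semistandardness of the bitableau or the lexicographic condition, and a priori they will not commute with the top operators; making them do so is equivalent to producing a Kronecker analogue of $\RSK$ and its dual $\bRSK$ that linearizes a bitableau into a compatible pair of crystal coordinates. I expect this insertion-algorithm step to be the genuine obstacle, and it is precisely the point at which our partial progress stops: the one-sided (monomial) statement requires only the top crystal, whereas the full Schur-level statement requires the still-missing bijective engine on the bottom.
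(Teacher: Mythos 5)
You are attempting to prove Conjecture \ref{mainconj}, which the paper does not prove: it is the motivating open problem, and the authors stress that they only have a good candidate procedure for \emph{one} of the two reading words. Your proposal correctly reconstructs the paper's intended program --- equip the bitableaux of shape $\lambda$ with commuting $\mathfrak{gl}_n$- and $\mathfrak{gl}_m$-crystal structures, note that each connected bicomponent is then forced to be $B(\mu)\times B(\nu)$, and count doubly highest-weight elements, which by the identity $s_\lambda[\mathbf{xy}]=\sum_{\mu,\nu}g(\lambda,\mu,\nu)s_\mu(\mathbf{x})s_\nu(\mathbf{y})$ must number $g(\lambda,\mu,\nu)$ --- so as a roadmap it matches the paper exactly. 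But as a proof it has a genuine gap, and moreover you have located the gap on the wrong side of the bitableau.

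Your first step asserts that the $X$-side crystal on the \emph{top} letters, via ``the usual signature/bracketing rule,'' exists and has top-Yamanouchi highest-weight elements of weight $\mu$; this is not a routine check but is precisely the missing ingredient. There is no canonical top reading word: the paper's $u(T)$ and $u'(T)$ work only for one-row and one-column shapes (via RSK and Burge insertion respectively), and naive bracketing on top entries does not preserve lexicographic semistandardness. The side that actually can be handled is the \emph{bottom} one, and not by naive bracketing either: the paper identifies bitableaux with semistandard tableaux with entries in $[nm]$ via $(i,j)\mapsto(i-1)m+j$, restricts the $\mathfrak{gl}_{nm}$-crystal to the block-diagonal $\mathfrak{gl}_m^n$ --- which is a standard Levi, so restriction simply deletes root operators --- and then applies a tensor product rule to descend to the diagonal $\mathfrak{gl}_m$, recovering bracketing on the sort-by-top word $w(T)$. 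No analogous argument exists for the $\mathfrak{gl}_n$ side, because that copy of $\mathfrak{gl}_n$ (embedded by $a_{ij}\mathbf{1}_m$ blocks) is not a standard Levi of $\mathfrak{gl}_{nm}$. Consequently your intermediate ``one-sided count already in hand'' (Schur in $\mathbf{x}$, monomial in $\mathbf{y}$) is itself unproven and is not Theorem \ref{thm:monominterp}, which is the transpose statement: monomial in $\mathbf{x}$ (top weights), Schur in $\mathbf{y}$ (bottom weights), obtained from the $\mathfrak{gl}_m$-crystal just described. Your closing admission that the ``bijective engine'' is missing is accurate, but it applies to your step one, not only your step two; with the top crystal unestablished and the commutation unaddressed, the proposal restates the conjecture rather than proving it.
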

 
 One important caveat about this conjecture is that we only have a good candidate procedure to extract one of the reading words. How to extract the second reading word remains the key mystery left to resolve in order to carry out the program outlined in this paper. 
 
 The primary goal of this paper is to motivate and provide evidence for this conjecture.  Our main new technical result is partial progress towards the conjecture, and uses this framework to give a positive combinatorial interpretation for the \emph{monomial} expansion of a Kronecker product of Schur functions:

 \begin{theorem}\label{thm:monominterp}
     The Kronecker product of two Schur functions can be expanded in the monomial basis as:
    \begin{align*}
        s_\lambda\ast s_\nu&=\sum_{\mu}d(\lambda,\mu,\nu)m_\mu.
    \end{align*}
    where $d(\lambda,\mu,\nu)$ counts the number of bitableaux $T$ of shape $\lambda$, with weights $a(T)= \mu$ and $b(T)= \nu$, and with Yamanouchi sort-by-top reading word $w(T)$. 
 \end{theorem}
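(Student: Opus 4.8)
The plan is to pass to a statement about individual coefficients and then to a character identity. Since $\langle m_\alpha,h_\beta\rangle=\delta_{\alpha\beta}$, the asserted expansion is equivalent to the family of scalar identities $d(\lambda,\mu,\nu)=\langle s_\lambda\ast s_\nu,h_\mu\rangle$, one for each partition $\mu$. Expanding the internal product in the Schur basis and using $\langle s_\tau,h_\mu\rangle=K_{\tau\mu}$ gives
\begin{align*}
\langle s_\lambda\ast s_\nu,h_\mu\rangle=\sum_\tau K_{\tau\mu}\,g(\lambda,\tau,\nu),
\end{align*}
where $g$ is the Kronecker coefficient and $K_{\tau\mu}$ the Kostka number; equivalently, by Frobenius reciprocity and self-duality of Specht modules, this number is $\dim\operatorname{Hom}_{S_\mu}(S^\lambda,S^\nu)$. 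So it suffices to show that the bitableaux of shape $\lambda$ with $a(T)=\mu$, $b(T)=\nu$, and Yamanouchi sort-by-top reading word are counted by $\sum_\tau K_{\tau\mu}\,g(\lambda,\tau,\nu)$.

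Rather than match each $\mu$ separately, I would prove the single character identity $\sum_T x^{a(T)}=s_\lambda\ast s_\nu$, where $T$ ranges over all bitableaux of shape $\lambda$ with $b(T)=\nu$ and Yamanouchi sort-by-top reading word, and $x^{a(T)}$ denotes the monomial in the top content. Reading off the coefficient of $m_\mu$ on each side recovers the scalar identities of the previous step, and also shows en route that the left-hand generating function is symmetric. The natural mechanism is a type-$A$ crystal structure carried by the top letters, whose operators send a biletter \bi{i}{j} with top letter $i$ to the biletter \bi{i+1}{j} while leaving the bottom letters alone; the content of the top letters is the crystal weight, so each connected component contributes the Schur function of its highest weight. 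Verifying the well-definedness of this action---most delicately, that the sort-by-top reading word and the bottom weight $b(T)=\nu$ are preserved by the top crystal operators even though they alter the sorting key---is the first substantial lemma.

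With such a crystal in hand, the character identity reduces to counting highest-weight components: I must show that the number of highest-weight bitableaux of top weight $\tau$---those whose top letters form the superstandard filling of shape $\tau$, still carrying $b(T)=\nu$ and a Yamanouchi sort-by-top word---equals the Kronecker coefficient $g(\lambda,\tau,\nu)$. Summing $s_\tau$ over these components then yields $\sum_\tau g(\lambda,\tau,\nu)s_\tau=s_\lambda\ast s_\nu$, and the fibers over each highest weight have size $K_{\tau\mu}$ in top content $\mu$, reproducing the factor $K_{\tau\mu}$ from the first step. I would establish the highest-weight count directly against the $\dim\operatorname{Hom}_{S_\mu}(S^\lambda,S^\nu)$ formula, constructing an explicit bijection between these highest-weight bitableaux and a standard model for the Kronecker multiplicity, rather than appealing to Conjecture \ref{mainconj}.

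The main obstacle is exactly this last identification, and it is hard for a structural reason. The full interpretation of Conjecture \ref{mainconj} needs two Yamanouchi conditions to pin down both Schur factors, and producing the second reading word is the open problem of the paper; the monomial expansion is reachable precisely because passing from the Schur to the monomial basis frees one index, so that the coefficient of $m_\mu$ is a bare weight multiplicity requiring no second highest-weight condition. Turning this heuristic into a proof---showing that the single sort-by-top condition already forces the highest-weight components to be enumerated by $g(\lambda,\tau,\nu)$---is where I expect the real difficulty to lie, most plausibly through a weight-preserving bijection, or a sign-reversing involution cancelling all contributions except those realizing $\sum_\tau K_{\tau\mu}\,g(\lambda,\tau,\nu)$.
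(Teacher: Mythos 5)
Your reduction to the scalar identities $d(\lambda,\mu,\nu)=\sum_\tau K_{\tau\mu}\,g(\lambda,\tau,\nu)$ and the reformulation as the generating-function identity $\sum_T x^{a(T)}=s_\lambda\ast s_\nu$ are both correct, but the mechanism you propose for proving that identity has a genuine gap: you put the crystal structure on the \emph{top} letters, and that is exactly the part of the program the paper leaves open. Your ``first substantial lemma'' is already in trouble: the naive operator sending $\bi{i}{j}$ to $\bi{i+1}{j}$ does not in general preserve lexicographic semistandardness (the modified box can violate the row or column conditions, forcing a rearrangement of entries that in turn disturbs the sort-by-top word $w(T)$), and Section \ref{sec:examples} of the paper shows concretely that a $\mathfrak{gl}_n$-structure commuting with the bottom crystal is not pinned down by any such local rule --- whole components remain undetermined even for shapes $(2,2)$ and $(3,1)$. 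Worse, your final step --- showing that the top-highest-weight bitableaux with Yamanouchi $w(T)$ and top weight $\tau$ are counted by $g(\lambda,\tau,\nu)$ --- would constitute a positive combinatorial interpretation of the Kronecker coefficients, i.e.\ it amounts to the paper's main open problem and essentially to Conjecture \ref{mainconj}. The ``explicit bijection to a standard model for the Kronecker multiplicity'' you defer to is that open problem, not a routine verification, so your plan proves the theorem only modulo a statement strictly stronger than the theorem.

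The paper's proof exploits precisely the asymmetry you articulate in your last paragraph but then fail to use: since the coefficient of $m_\mu$ in $\mathbf{x}$ is a bare torus-weight multiplicity, no crystal structure in the top direction is needed at all. Section \ref{sec:glm-crystal} constructs a $\mathfrak{gl}_m$-crystal on the \emph{bottom} letters (restrict the standard $\mathfrak{gl}_{nm}$-crystal on semistandard tableaux of shape $\lambda$ through the Levi subalgebra $\mathfrak{gl}_m^n$, then apply a tensor product rule to reach the diagonal $\mathfrak{gl}_m$), whose operators change only bottom entries --- hence preserve $a(T)$ --- and whose highest-weight elements are exactly the bitableaux with $w(T)$ Yamanouchi. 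The coefficient of $m_\mu[\mathbf{x}]s_\nu[\mathbf{y}]$ in $s_\lambda[\mathbf{xy}]$, which is the decomposition of $S^\lambda(V\otimes W)$ under $T_V\times GL(W)$, is then the number of $\mathfrak{gl}_m$-highest-weight elements with $a(T)=\mu$ and $b(T)=\nu$, and this is $d(\lambda,\mu,\nu)$ by construction; your identity $\sum_T x^{a(T)}=s_\lambda\ast s_\nu$ falls out for free, with no need to resolve the $\mathbf{x}$-direction into Schur functions. If you want to salvage your outline, replace your top crystal with this bottom crystal: the highest-weight counting then happens entirely in the $\mathbf{y}$ variables, where the structure has actually been built.
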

 
 The remaining sections of the paper are organized as follows.

\begin{itemize}
    \item   In Section \ref{prelims} we will recall some necessary background material about representations of general linear groups, symmetric functions, Kronecker coefficients, RSK insertion, and crystals.  
    
    \item In Section \ref{BitabIntro} we introduce lexicographic bitableaux, and define various quantities associated to them.  We then formulate the crystal basis approach to the Kronecker coefficients problem, which is the motivation behind Conjecture \ref{mainconj}.

    \item Section \ref{sec:glm-crystal} is the main technical section of the paper.  We construct a $\mathfrak{gl}_m$-crystal on the set of bitableaux with entries in $[n] \times [m]$, such that the crystal operators preserve the $\mathfrak{gl}_n$-weights.  Using this crystal structure, we then deduce Theorem \ref{thm:monominterp}. 

    \item In Section \ref{sec:known} we look at some known cases and put them into our framework.  We explain how the RSK and dual RSK insertion algorithms can be thought of as a special case of our program -- or alternatively, how our program can be thought of as searching for a generalization of RSK.  We also look at the Kronecker tableaux of Ballentine and Orellana \cite{ballantine2005combinatorial}, and observe that they seem to fit neatly into our framework as well. 

    \item Finally, in Section \ref{sec:examples} we close out by giving some examples of crystal structures and partial crystal structures on sets of bitableaux.  
    
\end{itemize}

As of writing this, we have been unable to carry out this program in full to find a combinatorial interpretation of the Kronecker coefficients.  Nevertheless, having tried a number of ideas that ``almost" work, or work in special cases, we believe that something along these lines is possible.   So we hope you treat this paper as an open invitation:  If you have any ideas on how to extract these reading words or construct these crystals, we'd love to hear them. 

\subsection*{Acknowledgments}
We'd like to thank the organizers and our fellow participants in MAA Project NExT 2023, where this collaboration started. The first author was supported by NSF grant DMS-2401515.

\section{Preliminaries} \label{prelims}

\subsection{Polynomial Representations and Schur Functions}

A representation of $GL_n(\mathbb{C})$ is a \emph{polynomial representation} if it is isomorphic to a summand of a direct sum of tensor powers $(\mathbb{C}^n)^{\otimes k}$ of the defining representation.  Equivalently this means that the representation map $\phi: GL_n(\mathbb{C}) \to GL_N(\mathbb{C})$ is given by polynomial functions in the matrix entries. 

The irreducible polynomial representations of $GL_n(\mathbb{C})$ are indexed by partitions $\lambda$ with at most $n$ parts.  They are given by applying an appropriate Schur functor to the defining representation $\mathbb{C}^n$.  The \emph{character} of a representation is given by taking the trace of the representation evaluated at a diagonal matrix:
\[
\begin{bmatrix}
x_1 & 0 &  \dots & 0\\
0 & x_2  & \dots & 0 \\
\vdots & \vdots & \ddots & \vdots \\
0 & 0 & \dots & x_n
\end{bmatrix}
\]
If we take the character of a polynomial representation, this will be a symmetric polynomial in the entries $x_i$. The \emph{Schur polynomial} $s_\lambda(x_1, x_2, \dots, x_n)$ is the character of the irreducible polynomial representation $S^\lambda(\mathbb{C}^n)$.

Combinatorially the Schur polynomial can be described by:

$$s_\lambda(\mathbf{x}) =  s_\lambda(x_1, x_2, \dots, x_n) = {\hspace{-.3cm}} \sum_{ T \in SSYT(\lambda, n)} {\hspace{-.3cm}}x_1^{T(1)}x_2^{T(2)}\dots x_n^{T(n)} = {\hspace{-.3cm}} \sum_{ T \in SSYT(\lambda, n)} {\hspace{-.3cm}} \mathbf{x}^{c(T)}$$ where $SSYT(\lambda,n)$ denotes the set of semistandard Young tableaux of shape $\lambda$ and entries in $\{1, 2, \dots, n \}$, and $T(k)$ denotes the number of $k$'s in the Young tableau $T$.

Taking characters defines an isomorphism between the Grothendieck ring of polynomial representations $K_0(Rep^{pol}(GL_n(\mathbb{C}))$ and the ring of symmetric polynomials in $n$ variables $\Lambda_n$.  As $n$ tends to infinity, this stabilizes to an isomorphism between the Grothendieck ring of polynomial functors $K_0(\mathcal{P}ol)$ and the ring of symmetric functions $\Lambda$.

These Grothendieck rings come equipped with a natural bilinear form defined by setting
$$\langle [V], [W] \rangle \coloneq \dim(Hom(V,W))$$
for representations $V$ and $W$, and extending linearly to virtual representations. Transferring this to $\Lambda_n$ or $\Lambda$ defines the Hall inner product, which can alternatively be characterized as the unique inner product for which the Schur functions form an orthonormal basis.

\subsection{Kronecker Coefficients}

The most well-known definition of the \emph{Kronecker coefficients} $g(\lambda,\mu,\nu)$ are as the multiplicities in the decomposition of a tensor product of irreducible symmetric group representations:
$$ S^\lambda \otimes S^\mu \cong \bigoplus_\nu g(\lambda,\mu,\nu) S^\nu $$
For our purposes though it will be more convenient to pass through Schur-Weyl duality, and think of them as the branching multiplicities when restricting from $GL(V \otimes W)$ to $GL(V) \times GL(W)$:
$$ S^\nu(V \otimes W) \cong \bigoplus_{\lambda, \mu} g(\lambda,\mu,\nu) S^\lambda(V) \otimes S^\mu(W) $$

This motivates the so-called \emph{Kronecker comultiplication} of symmetric functions $\Delta: \Lambda \to \Lambda \otimes \Lambda$ that sends a symmetric function $p(\mathbf{z})$ to $p(\mathbf{xy})$ (where $p(\textbf{z})$ is shorthand for $p(z_1, z_2, \dots, z_n)$ and $p[\textbf{xy}]$ denotes the variable substitution $p(x_1y_1, x_1y_2 \dots, x_ny_m)$). Taking characters of the above decomposition gives the following expression for the Kronecker coproduct in the Schur basis: 

 $$s_\lambda[\textbf{xy}] = \sum_{\mu, \nu}g(\lambda,\mu, \nu)s_\mu(\textbf{x})s_\nu(\textbf{y})$$

The \emph{Kronecker product} of symmetric functions is defined as the map dual to Kronecker comultiplication with respect to the Hall inner product:
$$f * g \coloneq \langle f , \Delta(g) \rangle$$
In particular, in the Schur basis this means:

$$s_\lambda * s_\mu = \sum_\nu g(\lambda,\mu,\nu) s_\nu$$
While Theorem \ref{thm:monominterp} is stated in the language of the Kronecker product, we will primarily work with the Kronecker coproduct.

\subsection{RSK Insertion}

Given a semistandard Young tableau $T$ and a value $i$, one can insert $i$ into a row $R$ of $T$ as follows. If $i$ is at least as large as every entry in $R$, place $i$ at the end of $R$. Otherwise, $i$ replaces the left-most entry $j$ strictly larger than $i$, and $j$ is inserted into the next row of $T$. Given a word $w=w_1w_2\cdots w_\ell$, write $P(w)$ for the result of beginning with an empty tableau and successively inserting each value $w_1,w_2,\ldots,w_\ell$ into the first row.

\begin{example}
    \[P(211323)=\begin{ytableau}
        1 & 1 & 2 & 3 \\
        2 & 3
    \end{ytableau}\]
\end{example}

The \emph{RSK correspondence} takes a lexicographic biword \[\mathbf{w}=\left(\begin{array}{cccc}
     a_1 & a_2 & \cdots & a_\ell  \\
     b_1 & b_2 & \cdots & b_\ell
\end{array}\right)\] to a pair \[\RSK(\mathbf{w})=(P,Q)\] of semistandard tableaux where $P$ is obtained by inserting the word $b_1b_2\cdots b_\ell$ and $Q$ is obtained by recording $a_i$ in the same location as the box created upon inserting $b_i$. A notable symmetry property of RSK is that if $\mathbf{w}'$ is the biword obtained by exchanging the rows of $\mathbf{w}$ and sorting it lexicographically, then \[\RSK(\mathbf{w}')=(Q,P).\]

Let $W$ denote the monoid of words made up of positive integers with the operation of concatenation. The \emph{plactic monoid} is the quotient of $W$ by the relation setting $w\equiv u$ if $P(w)=P(u)$. This notion of equivalence was introduced by Knuth~\cite{knu:pmg}, but the monoid perspective was first considered by Lascoux and Sch\"utzenberger~\cite{LS:mp}.

Effectively, the elements of the plactic monoid correspond to semistandard Young tableaux, and the operation can be described entirely in terms of these tableaux via an operation called jeu de taquin. The jeu de taquin operation is made up of a sequence of slides of the form \begin{align*}
    \begin{ytableau}
        \none \amp a\\
        b
    \end{ytableau} &\rightarrow \begin{ytableau}
        b \amp a
    \end{ytableau} \text{ if } b\leq a \\
    \intertext{or}
    \begin{ytableau}
        \none \amp a\\
        b
    \end{ytableau} &\rightarrow \begin{ytableau}
        a \\ b
    \end{ytableau} \text{ if } b>a
\end{align*} where we imagine that every tableau has cells of infinite labels to its south and east. See Example~\ref{ex:jdt} for an example of the process and see \cite[Section 1.2]{fulton1997young} for more details. 

\begin{example}\label{ex:jdt} The product of the tableaux \[\begin{array}{ccc}
    \begin{ytableau}
     1 & 3\\
     2
\end{ytableau} & \text{ and } & \begin{ytableau}
     1 \amp 1 \amp 2\\
     2 \amp 3
\end{ytableau}
\end{array}\] in the plactic monoid is given by placing the first tableau southwest of the second to form the following skew tableau. We perform the first sequence of slides available to us:
\[\begin{ytableau}
    \none & \none & 1 & 1 & 2\\
    \none & \none & 2 & 3\\
    1 & 3\\
    2
\end{ytableau}\rightarrow \begin{ytableau}
    \none & \none & 1 & 1 & 2\\
    \none & 2 & \none & 3\\
    1 & 3\\
    2
\end{ytableau}\rightarrow \begin{ytableau}
    \none & \none & 1 & 1 & 2\\
    \none & 2 & 3\\
    1 & 3\\
    2
\end{ytableau}.\] Continuing to apply these slides, we eventually obtain the straight-shape tableau \[\begin{ytableau}
    1 & 1 & 1 & 2\\
    2 & 2 & 3\\
    3
\end{ytableau}.\]
Moreover this final tableau does not depend on the choices of which slide to do at a given step. 
\end{example}

\subsection{Crystals and Semistandard Young Tableaux}

Crystals are combinatorial objects that encode the structure of representations of reductive Lie algebras. For our purposes, we will only be interested in the Lie algebras $\mathfrak{gl}_n$ and $\mathfrak{gl}_n \times \mathfrak{gl}_m$, and all crystals will be assumed to be finite. 
A $\mathfrak{gl}_n$-crystal consists of the following data:

\begin{enumerate}
    \item An underlying finite set $\mathcal{B}$. 
    \item A weight function $\omega: \mathcal{B} \to \mathbb{Z}^n$.
    \item $n-1$ raising operators $e_i: \mathcal{B} \to \mathcal{B} \cup \{ 0 \}$  for $i = 1, \dots , n-1$.
    \item $n-1$ lowering operators $f_i: \mathcal{B} \to \mathcal{B} \cup \{ 0 \}$ for $i = 1, \dots , n-1$.
\end{enumerate}
satisfying certain axioms.  A $\mathfrak{gl}_n \times \mathfrak{gl}_m$-crystal is a single underlying set equipped with both a $\mathfrak{gl}_n$-crystal and a $\mathfrak{gl}_m$-crystal structure, such that the raising and lowering operators from the two crystal structures commute with one another.  

The crystals we construct will be obtained by constructing a weight preserving bijection between the sets we are interested in and certain known crystals, and pulling back the crystal structure. As such the axioms will hold automatically, so we will not describe them here. We refer to \cite{bump2017crystal} for the general theory.  

An element $b \in \mathcal{B}$ is \emph{highest weight} if $e_i(b) = 0$ for each $i$.  We say that a crystal is irreducible if it cannot be decomposed into a disjoint union of two smaller crystals.  The following proposition summarizes the close connection between crystals and representations. 

\newpage

\begin{proposition} \textbf{(See \cite{bump2017crystal}, Chapter 13)}
    \begin{enumerate}
        \item For each dominant weight $\lambda$ there is an irreducible $\mathfrak{g}$-crystal $B_\lambda$ that has a unique highest weight element of weight $\lambda$. Similarly, there is a unique irreducible $\mathfrak{g}$-representation $V(\lambda)$ with a highest weight vector of weight $\lambda$. 

        \item Every irreducible $\mathfrak{g}$-crystal is isomorphic to $B_\lambda$ for some $\lambda$, and every irreducible finite dimensional representation of $\mathfrak{g}$ is isomorphic to $V(\lambda)$ for some $\lambda$. 

        \item The number of elements of $B_\lambda$ of weight $\mu$ is equal to the dimension of the $\mu$ weight space in $V(\lambda)$.

    \end{enumerate}
\end{proposition}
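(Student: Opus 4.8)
The plan is to split the statement into its representation-theoretic half, which is entirely classical, and its crystal-theoretic half, where the real content lies. For the representations I would simply invoke highest weight theory for reductive Lie algebras: by Weyl's theorem every finite-dimensional $\mathfrak{g}$-representation is completely reducible, and the theorem of the highest weight puts the irreducibles in bijection with dominant weights $\lambda$, each $V(\lambda)$ carrying a unique (up to scalar) highest weight vector of weight $\lambda$ annihilated by all the raising operators. This settles the representation clauses of parts (1) and (2), and produces the numbers $\dim V(\lambda)_\mu$ appearing in part (3).

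For the crystal side the cleanest uniform argument runs through the quantized enveloping algebra $U_q(\mathfrak{g})$. By Kashiwara's theorem each integrable highest weight module $V_q(\lambda)$ admits a crystal basis $(L(\lambda), B(\lambda))$ that is unique up to isomorphism; I would define $B_\lambda := B(\lambda)$, with weight function inherited from the weight grading on $V_q(\lambda)$ and with $e_i, f_i$ the Kashiwara operators induced on $L(\lambda)/qL(\lambda)$. The crystal clauses of parts (1) and (2) then follow from Kashiwara's uniqueness statement together with the observation that connected components of a crystal correspond to irreducible summands of the underlying module. Part (3) is the point where the two halves meet: since $B_\lambda$ is by construction the $q \to 0$ reduction of a basis of $V_q(\lambda)$ compatible with the weight decomposition, counting the elements of $B_\lambda$ of a fixed weight $\mu$ recovers $\dim V_q(\lambda)_\mu$, and this dimension is independent of $q$ and equals $\dim V(\lambda)_\mu$ at $q = 1$, which is exactly the asserted equality.

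The hard part will be the crystal side of part (2): showing that an abstract connected crystal is determined up to isomorphism by its highest weight, equivalently that every irreducible crystal is some $B(\lambda)$. In the quantum-group framework this is packaged inside Kashiwara's existence-and-uniqueness theorem, whose proof is the technically demanding ``grand loop'' induction. For the purposes of this paper one could instead stay in type $A$ and bypass quantum groups entirely: realize $B_\lambda$ concretely as $SSYT(\lambda,n)$ with the Kashiwara--Nakashima tableau operators, read off part (3) directly from the combinatorial formula $s_\lambda = \operatorname{char} S^\lambda(\mathbb{C}^n)$ recalled in the preliminaries, and obtain the classification by embedding an arbitrary $\mathfrak{gl}_n$-crystal into a tensor power of the standard crystal and decomposing it via RSK insertion. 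In that route the crux becomes showing the tableau crystal is connected with the superstandard tableau as its unique highest weight element, which amounts to verifying Stembridge's local crystal axioms.
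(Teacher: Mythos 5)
This proposition appears in the paper purely as background: it is stated with a citation to \cite{bump2017crystal}, Chapter 13, and the authors give no proof, explicitly sidestepping the general theory because every crystal they construct is obtained by pulling back a known crystal structure along a weight-preserving bijection. So there is no internal proof to compare yours against; what you have written is a proof sketch of the cited fact itself. On its own terms, your outline is the standard one and is correct in substance: classical highest weight theory (Weyl complete reducibility plus the theorem of the highest weight) for the representation clauses, Kashiwara's existence-and-uniqueness theorem for crystal bases for the crystal clauses, and the observation that $B(\lambda)$ is the $q\to 0$ reduction of a weight-graded basis of $V_q(\lambda)$ for part (3) -- which is essentially how \cite{bump2017crystal} develops the material. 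Your proposed type $A$ shortcut (realize $B_\lambda$ as $SSYT(\lambda,n)$, read part (3) off the combinatorial formula $s_\lambda = \operatorname{char} S^\lambda(\mathbb{C}^n)$ recalled in the preliminaries, and classify via embeddings into tensor powers of the standard crystal) is also viable and closer to the combinatorial spirit of this paper.

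One caveat deserves emphasis, because your key step is false as literally stated: an abstract connected crystal satisfying only the weak crystal axioms is \emph{not} determined by its highest weight, and part (2) fails for such crystals -- there are many connected crystals with a unique highest weight element of weight $\lambda$ that are not isomorphic to $B_\lambda$. The proposition is true only for \emph{normal} crystals (those arising from crystal bases of modules; equivalently, in the simply-laced case, those satisfying Stembridge's local axioms), and likewise an arbitrary abstract $\mathfrak{gl}_n$-crystal does not embed into a tensor power of the standard crystal -- normality is essentially what guarantees such an embedding. Both of your routes do implicitly restrict to the right class (Kashiwara's theorem concerns crystals of $U_q(\mathfrak{g})$-modules, and your alternative invokes Stembridge's axioms), but you should state explicitly that ``irreducible $\mathfrak{g}$-crystal'' in the proposition means ``irreducible normal crystal''; this is also the convention the paper silently adopts when it remarks that the axioms hold automatically for pulled-back structures. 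Finally, in the Stembridge route you need more than verifying the local axioms for the tableau crystal and its connectedness: to get the classification in part (2) you must also invoke Stembridge's uniqueness theorem, that a connected crystal satisfying the local axioms is determined up to isomorphism by its highest weight.
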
 

The basic example of a $\mathfrak{gl}_n$-crystal is on the set $\mathcal{B} = SSYT(\lambda,n)$ of semistandard Young tableaux of shape $\lambda$ and entries in $[n]$.   The weight function just records how many of each number occur in the tableaux. For example,

$$\omega \left(\;
\begin{ytableau}
            1 & 1 & 1 & 2 & 2 & 3 \\
            2 & 2 & 3 \\
            3 & 4 
        \end{ytableau} \;\right)
        = (3,4,3,1)$$
because there are $3$ ones, $4$ twos, $3$ threes, and $1$ four. Next we will explain the procedure for computing the raising and lowering operators for this crystal.  

First we extract the \emph{reading word} of the tableau, by reading its entries left-to-right in each row starting with the bottom row. 
$$
\begin{ytableau}
            1 & 1 & 1 & 2 & 2 & 3 \\
            2 & 2 & 3 \\
            3 & 4 
        \end{ytableau}
        \longrightarrow
        34223111223
$$
  The crystal raising operator $e_i$ will turn an $i+1$ into an $i$ (or send the tableaux to zero), and likewise the lowering operator $f_i$ will turn an $i$ into an $i+1$.\footnote{Yes,  the raising operator decreases an entry and the lowering operator increases an entry.  The ``raising" and ``lowering" is with respect to the dominance order on $\mathfrak{gl}_n$-weights.} 

In order to determine which $i$ or $i+1$ is affected, we apply the \emph{parenthesization procedure} to the reading word:  We convert every $i$ to a ``$)$" and every $i+1$ to a ``$($" ignoring all other entries.  Here is an example when $i = 2$:

$$ 34223111223$$
$$( \ \  ) )( \ \ \ \ \ \ ))( $$ 

We then change either the rightmost unmatched ``$)$"  to a ``$($" if we are applying a lowering operator, or the leftmost unmatched ``$($"  to a ``$)$" if we are applying a raising operator. Applying the lowering operator $f_2$ to our running example and converting back to a tableau we get:

\newpage

$$( \ \  ) )( \ \ \ \ \ \ )(( $$
$$ 34223111233$$
$$
\begin{ytableau}
            1 & 1 & 1 & 2 & 3 & 3 \\
            2 & 2 & 3 \\
            3 & 4 
        \end{ytableau}
$$

so $$f_2 \left(\;
\begin{ytableau}
            1 & 1 & 1 & 2 & 2 & 3 \\
            2 & 2 & 3 \\
            3 & 4 
        \end{ytableau} \;\right)
        =
\begin{ytableau}
            1 & 1 & 1 & 2 & 3 & 3 \\
            2 & 2 & 3 \\
            3 & 4 
        \end{ytableau}
$$
If there are no unmatched ``("s the raising operator sends the tableaux to zero.  A reading word is a \emph{Yamanouchi} word if this happens for every pair $(i , i+1)$, or in other words, if for each $i$ every suffix of the word has at least as many $i$'s as $i+1$'s. Tableaux with Yamanouchi reading word correspond to highest weight vectors in the corresponding representations. 

Note that while our running example was for a tableau of partition shape $\lambda$, the same procedure works for skew shapes $\lambda/\mu$ as well.  Moreover, if we ignore the steps in which we convert back and forth between tableaux and reading words, this parenthesization procedure also defines a crystal structure on the set $Word(n,k)$ of words of a fixed length $k$ and alphabet $[n]$.

\subsubsection{Tensor products}

A crystal tensor product rule is a process for taking two $\mathfrak{g}$-crystals on sets $B$ and $B'$ and constructing a $\mathfrak{g}$-crystal on $B \times B'$ that encodes the tensor product of the corresponding representations. 

 Note that we were careful to say \emph{a} tensor product rule rather than \emph{the} tensor product rule as there are in fact a number of different ways to do this.  For example if we compute $X \otimes Y$ via the standard tensor product rule we get something different than if we first swap the factors, compute $Y \otimes X$ the same way, and then swap back.  Here are two tensor product rules for $\mathfrak{sl_2}$ applied to $B(2) \otimes B(4)$:

\medskip

\begin{center}
\begin{tikzpicture}[
    vertex/.style={circle, fill=black, inner sep=2pt}  % larger dots
  ]
  %–– First 3×5 grid at x=0 with edges ––  
  \foreach \row in {1,2,3}{
    \foreach \col in {1,...,5}{
      \node[vertex] (G1-\row-\col) at ({(\col-1)*1cm}, {-(\row-1)*1cm}) {};
    }
  }
  % Row 1: all horizontal
  \foreach \c in {1,...,4}
    \draw[thick,->] (G1-1-\c) -- (G1-1-\the\numexpr\c+1\relax);
  % Row 2: cols 1–4 only
  \foreach \c in {1,...,3}
    \draw[thick,->] (G1-2-\c) -- (G1-2-\the\numexpr\c+1\relax);
  % Row 3: only 1–2 and 2–3
  \foreach \c in {1,2}
    \draw[thick,->] (G1-3-\c) -- (G1-3-\the\numexpr\c+1\relax);
  % Vertical down last col (5)
  \draw[thick,->] 
    (G1-1-5) -- (G1-2-5);
   \draw[thick,->] 
    (G1-2-5) -- (G1-3-5);
  % Single vertical at col 4
  \draw[thick,->] (G1-2-4) -- (G1-3-4);

  %–– Second grid: exact 180° rotation of the first ––  
  \begin{scope}[xshift=6cm]                     % shift entire block right
    \begin{scope}[shift={(2cm,-1cm)},            % move center of grid to origin
                   rotate=180,                   % rotate around that center
                   shift={(-2cm,1cm)}]          % move back
      % nodes
      \foreach \row in {1,2,3}{
        \foreach \col in {1,...,5}{
          \node[vertex] (G2-\row-\col) at ({(\col-1)*1cm}, {-(\row-1)*1cm}) {};
        }
      }
      % edges (same code as grid1)
      \foreach \c in {1,...,4}
         \draw[thick,<-]  (G2-1-\c) -- (G2-1-\the\numexpr\c+1\relax);
      \foreach \c in {1,...,3}
         \draw[thick,<-]  (G2-2-\c) -- (G2-2-\the\numexpr\c+1\relax);
      \foreach \c in {1,2}
         \draw[thick,<-]  (G2-3-\c) -- (G2-3-\the\numexpr\c+1\relax);
       \draw[thick,<-] 
        (G2-1-5) -- (G2-2-5);
        \draw[thick,<-]
        (G2-2-5) -- (G2-3-5);
      \draw[thick,<-] (G2-2-4) -- (G2-3-4);
    \end{scope}
  \end{scope}
\end{tikzpicture}

\small{Fig: The standard tensor product rule (left) and a twisted tensor product rule (right). They are different (but isomorphic) crystal structures on the same set.}
\end{center}

 We will see in some examples that sometimes it is convenient to switch between these different tensor product rules. We will not state the standard rule in full generality here (see \cite{bump2017crystal} Section 2.3 for details), but we will explain how it works for these crystals on words and tableaux.  

We will start with how the standard tensor product rule looks for the crystals on words. There is an isomorphism of crystals
$$Word(n,k) \otimes Word(n, \ell) \cong Word(n, k+\ell)$$
sending a pair of words $(w_1, w_2)$ to their concatenation $w_1w_2$, and the standard tensor product of crystals is defined such that this is an isomorphism.  We note that instead sending $(w_1, w_2)$ to $w_2w_1$ defines a different tensor product rule.

For the crystals $B(\lambda)$ and $B(\mu)$ on semistandard Young tableaux their tensor product can be identified with a union of irreducible crystals $B(\nu)$ of partitions $\nu$.  Given a pair of tableaux a new tableau is computed by extracting their reading words,  concatenating them, and converting the new reading word back to a tableaux using RSK insertion. Equivalently one can take the two tableaux, combine them into a skew shape, and then perform jeu de taquin to convert it to a partition shape as in Example~\ref{ex:jdt}.

\section{Lexicographic Bitableaux} \label{BitabIntro}

Now we introduce the main combinatorial objects of study: lexicographic bitableaux.  A lexicographic bitableau of shape $\lambda$ is a filling of the boxes of $\lambda$ with ordered pairs $(a,b)$ with $a \in [n]$ and $b\in [m]$ with the property that the entries are weakly increasing lexicographically along rows, and strictly increasing lexicographically along columns---that is, it is semistandard with respect to the lexicographic ordering on pairs. Write $\BB_\lambda$ for the set of all lexicographic bitableaux of shape $\lambda$ with entries in $\Z_{>0}\times\Z_{>0}$. Write $\BB_\lambda(n,m)$ for the subset consisting of lexicographic bitableaux of shape $\lambda$ and entries in $[n]\times[m]$.

There are other ways one could put a total order on the set of ordered pairs $(a,b)$ with $a \in [n]$ and $b \in [m]$, which would lead to other notions of bitableaux (other than lexicographic). We will not consider any of those here, so for the rest of the paper the word ``bitableaux" will refer only to lexicographic bitableaux\footnote{The term ``bitableau'' also appears in a different sense in the literature concering representations of the hyperoctahedral groups to refer to a pair of tableaux (see e.g. \cite{adin2017character}).}.  

\medskip

\begin{example}
    Of the tableaux

    \begin{align*}
        \begin{array}{ccc}
             T_1=\bitableau{
                \bi12 & \bi21\\
                \bi22 & \bi22\\
                \bi31
            }, & T_2=\bitableau{
                \bi12 & \bi21\\
                \bi32 & \bi22\\
                \bi33
            },\text{ and} & T_3=\bitableau{
                \bi12 & \bi22\\
                \bi22 & \bi22\\
                \bi31
            },
        \end{array}
    \end{align*} only $T_1$ is an example of a lexicographic bitableau. Note that $T_2$ has a decrease in its second row and $T_3$ has a repeat within its second column.
\end{example}

\medskip

Associated to a bitableau we have two weights:

\begin{enumerate}
    \item The $a$-weight is an $n$-tuple recording how many times each number appears as the first coordinate of an entry in the bitableau. We denote this tuple by $a(T)$.

    \item The $b$-weight is an $m$-tuple recording how many times each number appears as the second coordinate of an entry in the bitableau. We denote this tuple by $b(T)$.
\end{enumerate}

\medskip

\begin{example}\label{ex:bitab_weights}
    For the lexicographic bitableau \begin{align*}
        T=\bitableau{
                \bi12 & \bi21\\
                \bi22 & \bi22\\
                \bi31
            },
    \end{align*} its weights are \begin{align*}
        a(T)&=(1,3,1) \text{ and}\\
        b(T)&=(2,3,0).
    \end{align*}
\end{example}

\medskip

The reason we are looking at these bitableaux is that they give us a natural combinatorial description of the monomial expansion for the so-called Kronecker co-product on symmetric functions.

\begin{align}
    s_\lambda[\textbf{xy}] = \sum_{T \in \mathcal{B}_\lambda}\textbf{x}^{a(T)}\textbf{y}^{b(T)} = \sum_{\mu, \nu}g(\lambda,\mu, \nu)s_\mu(\textbf{x})s_\nu(\textbf{y}) \label{eq:weights}
\end{align}
where the coefficients $g(\lambda,\mu, \nu)$ are the Kronecker coefficients.

To see this, recall that if $\textbf{z}_I$ is a collection of variables indexed by a totally ordered set $I$, then the Schur function $s_\lambda[\textbf{z}_I]$ is a sum over all tableaux of shape $\lambda$ with entries in $I$ that are semistandard with respect to the total order.  If we take $I = [n] \times [m]$ ordered lexicographically and then substitute $z_{(i,j)} = x_iy_j$ we get Equation \ref{eq:weights} above.

We will define a couple of reading words for bitableaux. For each of these words, we will read entries across rows left to right, starting at the bottom row and moving up. What distinguishes them is how they group the boxes of $T$. For $T\in\BB_\lambda$, let \begin{itemize}
    \item $w(T)$ be the word obtained by first reading the bottom entries of boxes with top entry one, then the bottom entries of boxes with top entry two, and so on; and
    \item $w'(T)$ be the word obtained by reading the bottom entries of boxes with the largest top entry, then the bottom entries of boxes with the second largest top entry, and so on.
\end{itemize}

\begin{example} The bitableau in Example \ref{ex:bitab_weights} has the following reading words.
    \begin{align*}
        w(T)&=22211\\
        w'(T)&=12212\\
    \end{align*}
\end{example}

\subsection*{A Crystal Approach to the Kronecker Coefficient Problem}

The Kronecker coefficient problem is usually phrased as being about interpreting certain \emph{numbers} combinatorially.  However, in a deeper sense it is really about understanding the internal structure of certain \emph{representations} combinatorially.  One way to understand representations combinatorially is to use crystals,  this motivates the following refinement of the Kronecker coefficient problem:

\medskip

\noindent \textbf{Open Problem:} \emph{Construct a $\mathfrak{gl}_n \times \mathfrak{gl}_m$ crystal structure on the set of bitableaux of shape $\lambda$ with entries in $[n] \times [m]$.}

\medskip

We'll note that the existence of such a crystal structure is guaranteed purely for weight reasons, but we need an actual description of the crystal for this to be useful. Given such a crystal, the Kronecker coefficient $g(\lambda,\mu, \nu)$ will be the number of components of the crystal that are isomorphic to the simple $\mathfrak{gl}_n \times \mathfrak{gl}_m$ crystal $B(\mu) \times B(\nu)$, or equivalently the number of highest-weight elements of weight $(\mu, \nu)$. 

In practice, constructing a $\mathfrak{gl}_n$ or $\mathfrak{gl}_m$ crystal on a set of combinatorial objects typically involves extracting a reading word from each object, and the highest weight elements are those with a Yamanouchi reading word.  This is the motivation behind Conjecture \ref{mainconj}.

\section{The $\mathfrak{gl}_m$-Crystal Structure}\label{sec:glm-crystal}

In this section, we will give our main technical result:  We construct a $\mathfrak{gl}_m$-crystal structure on the set of bitableaux of shape $\lambda$ with entries in $[n] \times [m]$, such that the crystal operators preserve the $\mathfrak{gl}_n$-weights. 

Using this crystal structure, we are then able to read off the highest weight vectors to give a positive combinatorial formula for the monomial expansion of a Kronecker product of Schur functions.

\subsection{ The copies of $\mathfrak{gl}_n \times \mathfrak{gl}_m$ and $\mathfrak{gl}_m$ inside $\mathfrak{gl}_{nm}$}

Our overall strategy will be to start with a $\mathfrak{gl}_{nm}$-crystal on tableaux of shape $\lambda$ and use it to construct our desired $\mathfrak{gl}_m$-crystal on bitableaux of the same shape. To do this we will first recall how the relevant copies of $\mathfrak{gl}_n \times \mathfrak{gl}_m$ and $\mathfrak{gl}_m$ sit inside $\mathfrak{gl}_{nm}$.

We may identify $[n] \times [m]$ with $[nm]$ via the map $(i,j) \to (i-1)m+j$, which can be easily seen to identify the lexicographic order with the standard order on $[nm]$. This allows us to identify our collection of bitableaux of shape $\lambda$ with the collection of semistandard tableaux of the same shape with entries from $[nm]$, which we know how to equip with the structure of a $\mathfrak{gl}_{nm}$-crystal. 

Moreover, this identification $(i,j) \to (i-1)m+j$ is compatible with the embedding $\mathfrak{gl}_n \times \mathfrak{gl}_m \hookrightarrow \mathfrak{gl}_{nm}$ that sends a pair of matrices $(A,B)$ to the block matrix:

\[
A\hat{*}B = 
\begin{bmatrix}
a_{11}\mathbf{1}_m + B & a_{12}\mathbf{1}_m & \dots & a_{1n}\mathbf{1}_m\\
a_{21}\mathbf{1}_m & a_{22}\mathbf{1}_m + B & \dots & a_{2n}\mathbf{1}_m \\
\vdots & \vdots & \ddots & \vdots \\
a_{n1}\mathbf{1}_m & a_{n2}\mathbf{1}_m & \dots & a_{nn}\mathbf{1}_m + B
\end{bmatrix}
\]

Here the $a_{ij}$'s are the entries of $A$ and $\mathbf{1}_m$ denotes the $m \times m$ identity matrix.  The compatibility with our identification  $(i,j) \to (i-1)m+j$ can be seen by looking at the diagonal entries: $$a_{ii} + b_{jj} = (A\hat{*}B)_{(i-1)m+j,(i-1)m+j}$$

This formula for $A \hat{*} B$ may seem unfamiliar to some. We'll note that it is just the Lie algebra version (i.e. the map of tangent spaces at the identity element) of the more familiar formula for the embedding of Lie groups $GL_n \times GL_m \hookrightarrow GL_{nm}$ given by the Kronecker product of matrices.

\[
X*Y = 
\begin{bmatrix}
x_{11}Y & x_{12}Y & \dots & x_{1n}Y\\
x_{21}Y & x_{22}Y & \dots & x_{2n}Y \\
\vdots & \vdots & \ddots & \vdots \\
x_{n1}Y & x_{n2}Y & \dots & x_{nn}Y
\end{bmatrix}
\]

If we set $A = 0$ in the above formula, we see that the copy of $\mathfrak{gl}_m$ we are interested in is embedded inside $\mathfrak{gl}_{nm}$ diagonally as:

\[
B  \rightarrow 
\begin{bmatrix}
B & 0 & \dots &0\\
0 & B & \dots & 0 \\
\vdots & \vdots & \ddots & \vdots \\
0 & 0  & \dots & B
\end{bmatrix}
\]

Similarly, if we look at the image of the standard Cartan subalgebra of $\mathfrak{gl}_n$ (the diagonal matrices) we get:

\[
\begin{bmatrix}
x_1 & 0 & \dots &0\\
0 & x_2 & \dots & 0 \\
\vdots & \vdots & \ddots & \vdots \\
0 & 0  & \dots & x_n
\end{bmatrix}
\rightarrow 
\begin{bmatrix}
x_1 \mathbf{1}_m & 0 & \dots &0\\
0 & x_2 \mathbf{1}_m & \dots & 0 \\
\vdots & \vdots & \ddots & \vdots \\
0 & 0  & \dots & x_n \mathbf{1}_m
\end{bmatrix}
\]
In particular notice that this coincides with the center of $\mathfrak{gl}^n_m$,  so the $\mathfrak{gl}_n$-weights will just record the action of the center on the $\mathfrak{gl}_m$-representations (a.k.a. the degrees of those representations) when we restrict a representation from $\mathfrak{gl}_{nm}$.

\subsection{Description of the Construction}

We will now describe how to compute the restriction of a crystal for an irreducible representation from $\mathfrak{gl}_{nm}$ to this block diagonal copy of $\mathfrak{gl}_m$,  giving a $\mathfrak{gl}_m$-crystal structure on the set of bitableaux preserving the $\mathfrak{gl}_{n}$-weights.

This section will be a more representation theoretic explanation of what we are doing and why it works.  The following section will go into the combinatorics more explicitly, and explain what the raising and lowering operators actually do to a bitableau.  

\medskip

\noindent \textbf{Step 0: Start with a  $\mathfrak{gl}_{nm}$-crystal.} 

The process we are about to describe for restricting a $\mathfrak{gl}_{nm}$-crystal to the diagonal copy of $\mathfrak{gl}_{m}$ is a uniform  construction for all $\mathfrak{gl}_{nm}$-crystals.  However for our purposes we will fix it to be the standard $\mathfrak{gl}_{nm}$-crystal structure on the set of semistandard tableaux of shape $\lambda$ and entries in $[nm]$.

\medskip

\noindent \textbf{Step 1: Restrict to the block-diagonal copy of $\mathfrak{gl}_m^n$}.

Before restricting all the way to our diagonal copy of $\mathfrak{gl}_m$, we first restrict to an intermediate subalgebra, the block-diagonal copy of $\mathfrak{gl}_m^n$:

\[
\begin{bmatrix}
\mathfrak{gl}_m & 0 & \dots &0\\
0 & \mathfrak{gl}_m & \dots & 0 \\
\vdots & \vdots & \ddots & \vdots \\
0 & 0  & \dots & \mathfrak{gl}_m
\end{bmatrix}
\]
where we have $n$ independently chosen $m\times m$ matrices along the diagonal (as opposed to our copy of $\mathfrak{gl}_m$, where they are all equal to one another).  

Restricting a $\mathfrak{gl}_{nm}$-crystal to a standard Levi subalgebra such as this is easy to describe. To restrict to a standard Levi factor like this, one just keeps the same root operators $e_i$ and $f_i$ for those simple roots which belong to the Levi factor, and ignore those that do not.  Here is an example of a $\mathfrak{gl}_4$-crystal restricted to the standard Levi copy of $\mathfrak{gl}_2 \times \mathfrak{gl}_2$: 

\begin{center}
\begin{tikzpicture}[
    every node/.style={circle, fill=black, inner sep=1pt},
    node distance=1cm
  ]
  % Original on left
  \foreach \i/\x/\y in {
    1/0/0, 2/0/-1, 3/1/-1, 4/2/-1,
    5/0/-2, 6/1/-2, 7/2/-2,
    8/1/-3, 9/2/-3, 10/3/-3
  } \node (\i) at (\x,\y) {};

  % Red edges
  \draw[c1, thick, ->]
    (1) -- (2);
    \draw[c1, thick, ->]
    (2) -- (5);
    \draw[c1, thick, ->]
    (3) -- (6);
    \draw[c1, thick, ->]
    (4) -- (7);
  % Yellow edges
  \draw[c3, thick, ->]
    (2) -- (3);
    \draw[c3, thick, ->]
    (5) -- (6);
    \draw[c3, thick, ->]
    (6) -- (8);
    \draw[c3, thick, ->]
    (7) -- (9);
  % Green edges
  \draw[c2, thick,->]
    (3) -- (4);
    \draw[c2, thick,->]
    (6) -- (7);
    \draw[c2, thick,->]
    (8) -- (9);
    \draw[c2, thick,->]
    (9) -- (10);

  % Arrow to modified version
 \draw[->, thick]
  (3,-1.5) -- ++(2,0 ) ;

  % Copy nodes at x+6 cm for the modified graph
  \begin{scope}[xshift=6cm]
    \foreach \i/\x/\y in {
      1/0/0, 2/0/-1, 3/1/-1, 4/2/-1,
      5/0/-2, 6/1/-2, 7/2/-2,
      8/1/-3, 9/2/-3, 10/3/-3
    } \node (\i') at (\x,\y) {};

    % Pink edges
    \draw[c1, thick, ->]
      (1') -- (2');
      \draw[c1, thick, ->]
      (2') -- (5');
      \draw[c1, thick, ->]
      (3') -- (6');
      \draw[c1, thick, ->]
      (4') -- (7');
    % Green edges
    \draw[c2, thick,->]
      (3') -- (4');
       \draw[c2, thick,->]
      (6') -- (7');
       \draw[c2, thick,->]
      (8') -- (9');
       \draw[c2, thick,->]
      (9') -- (10');
  \end{scope}
\end{tikzpicture}
\end{center}

Moreover the Cartan subalgebra for $\mathfrak{gl}_{n}$ is just scaling each of these factors by its degree as a $\mathfrak{gl}_{m}$ representation. So in particular the $\mathfrak{gl}_{m}$-operators all preserve the $\mathfrak{gl}_{n}$-weights.  Here are the $\mathfrak{gl}_{n}$-weights in the above example:

\begin{center}
\begin{tikzpicture}[
    vertex/.style={circle, fill=black, inner sep=1pt},
    node distance=1cm
  ]
  %–– Graph nodes –– 
  \foreach \i/\x/\y in {
    1/0/0,   2/0/-1, 3/1/-1, 4/2/-1,
    5/0/-2,  6/1/-2, 7/2/-2,
    8/1/-3,  9/2/-3, 10/3/-3
  } \node[vertex] (\i) at (\x,\y) {};

  %–– Edges ––
  \draw[c1,   thick,->] (1)--(2);
  \draw[c1,   thick,->] (2)--(5);
  \draw[c1,   thick,->](3)--(6);
  \draw[c1,   thick,->](4)--(7);
  \draw[c2,  thick,->] (3)--(4);
  \draw[c2,  thick,->](6)--(7);
  \draw[c2,  thick,->] (8)--(9);
  \draw[c2,  thick,->](9)--(10);

  %–– Ovals around components ––
  \node[draw,ellipse, dotted, thick, fill=none, inner sep=6pt, fit=(1)(2)(5),
        label=above:{(2,0)}] {};
  \node[draw,ellipse, dotted, thick, fill=none, inner sep=6pt, fit=(3)(4)(6)(7),
        label=above:{(1,1)}] {};
  \node[draw,ellipse, dotted, thick, fill=none, inner sep=6pt, fit=(8)(9)(10),
        label=right:{(0,2)}] {};
\end{tikzpicture}

\end{center}

\medskip

\noindent \textbf{Step 2: Restrict from $\mathfrak{gl}_m^n$ to the diagonally embedded copy of $\mathfrak{gl}_m$.}  

Recall that if $\mathfrak{g}$ is a Lie algebra, and $V$ and $W$ are $\mathfrak{g}$-representations then $V \otimes W$ is naturally a $\mathfrak{g} \times \mathfrak{g}$-representation with $(x,y) \in \mathfrak{g} \times \mathfrak{g}$ acting via $$(x,y)\cdot (v \otimes w) = xv \otimes w + v \otimes yw$$

If we restrict this action to the diagonal embedding of $\mathfrak{g}$ inside $\mathfrak{g} \times \mathfrak{g}$ we get an action of $\mathfrak{g}$ on $V \otimes W$, where $x \in \mathfrak{g}$ acts via:
$$ x \cdot (v \otimes w) = xv\otimes w + v \otimes xw$$
which one can recognize as the usual way $\mathfrak{g}$ acts on a tensor product $V\otimes W$. 

Really this is just how one defines the tensor product of $\mathfrak{g}$ representations to begin with, but for our purposes it means understanding restriction from  $\mathfrak{gl}_m^n$ to $\mathfrak{gl}_m$ is the same thing as understanding $n$-fold tensor products of $\mathfrak{gl}_m$-representations.

In terms of crystals, each component of a $\mathfrak{gl}_m^n$-crystal is of the form $B(\mu_1) \otimes B(\mu_2) \otimes \dots \otimes B(\mu_n)$ where each $B(\mu_i)$ is an irreducible $\mathfrak{gl}_m$-crystal and each copy of $\mathfrak{gl}_m$ acts on a different factor independently.  In order to get our $\mathfrak{gl}_m$-crystal crystal we will just apply a crystal tensor product rule to $B(\mu_1) \otimes B(\mu_2) \otimes \dots \otimes B(\mu_n)$. 

Recasting this slightly: For any vertex in our crystal, and simple root $\alpha$ for $\mathfrak{gl}_m$ we have $n$ different crystal raising operators $e_\alpha^1, 
e_\alpha^2, \dots, e_\alpha^n$ -- one for each factor, and the same for the crystal lowering operators $f_\alpha^i$.  A tensor product rule is then a way of coherently choosing one of these $e_\alpha^i$ to act by at every point to construct a single raising operator $e_\alpha$ (and a single lowering operator $f_\alpha$) on the same underlying set. 

In particular note that the edges of the tensor product $\mathfrak{gl}_m$-crystal are a subset of the edges of the $\mathfrak{gl}_m^n$ crystal. Since this whole $\mathfrak{gl}_m^n$-crystal preserves the $\mathfrak{gl}_n$-weights so does the $\mathfrak{gl}_m$-crystal.

\medskip

\noindent \textbf{Recap:} Without all the exposition and explanations here is the construction:

\begin{enumerate}
\setcounter{enumi}{-1}
    \item Start with a $\mathfrak{gl}_{nm}$-crystal.

    \item Restrict it to $\mathfrak{gl}_{m}^n$-crystal by just ignoring some root operators.

    \item Use a crystal tensor product rule to restrict from $\mathfrak{gl}_{m}^n$ to the diagonal copy of $\mathfrak{gl}_{m}$.
\end{enumerate}

\subsection{A More Combinatorial Description}

The description in the last section gives a picture of what's going on and why this works from a representation theory perspective.  Now let's see what it is actually doing combinatorially in terms of bitableaux. 

\medskip

\noindent \textbf{Step 0: Start with a $\mathfrak{gl}_{nm}$-crystal.} 

In terms of the underlying set, this will just be the collection of semistandard Young tableaux of shape $\lambda$ and entries in $[nm]$.  Using our bijection $(i-1)m+j \leftrightarrow (i,j)$ we identify this with the collection of bitableaux with entries in $[n] \times [m]$.  Here is an example of a tableau with entries in $[6]$ and its corresponding bitableau with entries in $[3] \times [2]$:

$$
   \ytableausetup
{boxsize=2em}
\begin{ytableau}
            1 & 1 & 1 & 2 & 3 & 5 & 5\\
            2 & 3 &3 & 4 & 5 & 6\\
            3 & 4 &5 &5 &6 
        \end{ytableau}
        \longleftrightarrow \ 
        \bitableau{
            \bi11 & \bi11 & \bi11 & \bi12 & \bi21 & \bi31 & \bi31 \\
            \bi12 & \bi21 & \bi21 & \bi22 & \bi31 & \bi32 \\
            \bi21 & \bi22 & \bi31 & \bi31 & \bi32
        }
$$

One can also trace through what this does to the crystal operators: For example, the operator that turns an $(i-1)m+j$ into an $(i-1)m+j+1$ (with $j<m$)  will now turn an  $\bitableau{
            \bi ij}$ into an $\bitableau{
            \bi i{j+1}}$.

\medskip 

\noindent \textbf{Step 1: Restrict to the block diagonal copy of $\mathfrak{gl}_m^n$}. 

Combinatorially this just means separating a bitableau into a collection of tableaux of skew-shape,  with each skew tableau corresponding to a fixed top entry:
$$
\bitableau{
            *(c1)\bi11 & *(c1) \bi11 & *(c1)\bi11 &  *(c1)\bi12 & *(c2) \bi21 & *(c3)\bi31 & *(c3)\bi 31 \\
            *(c1)\bi12 & *(c2) \bi21 & *(c2) \bi21 & *(c2) \bi22 & *(c3) \bi31 & *(c3) \bi32 \\
           *(c2) \bi21 & *(c2) \bi22 & *(c3) \bi31 & *(c3) \bi31 & *(c3) \bi32 
        }
    $$
 $$ \swarrow \hspace{1.3cm} \downarrow \hspace{1.3cm} \searrow $$
    $$
    \ytableausetup
{boxsize=2em}
\begin{ytableau}
             *(c1) 1 &  *(c1) 1 &  *(c1) 1 &  *(c1) 2 \\
             *(c1) 2 \\
             \none 
        \end{ytableau}
        \hspace{.7cm}
       \begin{ytableau}
             \none &  \none &  \none  &  \none & *(c2) 1 \\
             \none & *(c2) 1 & *(c2) 1 & *(c2) 2 \\
             *(c2) 1 & *(c2) 2
        \end{ytableau} 
 \hspace{.7cm}
 \begin{ytableau}
             \none &  \none &  \none  & *(c3) 1 & *(c3) 1 \\
             \none & \none & *(c3) 1 &  *(c3) 2 \\
             *(c3) 1 &  *(c3) 1 &  *(c3) 2
        \end{ytableau} 
$$

  Note that the crystal operators that are in $\mathfrak{gl}_{nm}$ but not in $\mathfrak{gl}_m^n$ are exactly those that go between different color classes. The $\mathfrak{gl}_m^n$ crystal structure we get from restriction is just made up of the separate, independent $\mathfrak{gl}_m$-crystal structures on each of these skew-shapes.

In particular we can just read off the reading words from each of these skew-shape tableaux and the raising and lowering operators on each piece will be determined by the usual parenthesization rule. 

\medskip 

\noindent \textbf{Step 2: Restrict from $\mathfrak{gl}_m^n$ to the diagonally embedded copy of $\mathfrak{gl}_m$.}

Now we take these $\mathfrak{gl}_m^n$-crystals and combine them into a single $\mathfrak{gl}_m$ via the tensor product rule.  One way to do this explicitly is to take the reading words for the separate skew-shapes and concatenate them:

  $$
    \ytableausetup
{boxsize=2em}
\begin{ytableau}
             *(c1) 1 &  *(c1) 1 &  *(c1) 1 &  *(c1) 2 \\
             *(c1) 2 \\
             \none 
        \end{ytableau}
        \hspace{.7cm}
       \begin{ytableau}
             \none &  \none &  \none  &  \none & *(c2) 1 \\
             \none & *(c2) 1 & *(c2) 1 & *(c2) 2 \\
             *(c2) 1 & *(c2) 2
        \end{ytableau} 
 \hspace{.7cm}
 \begin{ytableau}
             \none &  \none &  \none  & *(c3) 1 & *(c3) 1 \\
             \none & \none & *(c3) 1 &  *(c3) 2 \\
             *(c3) 1 &  *(c3) 1 &  *(c3) 2
        \end{ytableau} 
$$
 $$ \searrow \hspace{2cm} \downarrow \hspace{2cm} \swarrow $$
$$ \textcolor{c1}{21112}  \,\textcolor{c2}{121121} \,\textcolor{c3}{1121211} $$

We then can tell how to perform a raising or lowering operator by performing the parenthesization rule to this combined reading word. 

The fact that the crystal operators preserve the $\mathfrak{gl}_n$-weights corresponds to the fact that these raising and lowering operators only change the numbers inside the boxes, and don't change the number of boxes of each color. 

\subsection{Streamlined Combinatorics}

This process of separating the bitableau into tableau of skew shapes, and then concatenating the reading words makes it conceptually clear what we are doing.  However in practice we usually just extract the reading word directly: 

$$
\bitableau{
            *(c1)\bi11 & *(c1) \bi11 & *(c1)\bi11 &  *(c1)\bi12 & *(c2) \bi21 & *(c3)\bi31 & *(c3)\bi 31 \\
            *(c1)\bi12 & *(c2) \bi21 & *(c2) \bi21 & *(c2) \bi22 & *(c3) \bi31 & *(c3) \bi32 \\
           *(c2) \bi21 & *(c2) \bi22 & *(c3) \bi31 & *(c3) \bi31 & *(c3) \bi32 
        }
    $$
     $$ \downarrow $$
$$ \textcolor{c1}{21112}\,  \textcolor{c2}{121121}\, \textcolor{c3}{1121211} $$

To do this we read off the bottom entries  of those boxes where the top entry is $1$ (left to right in each row, bottom to top), followed by the bottom entries of the boxes with top entry $2$ (again left to right in each row, bottom to top), and so on.  This is the reading word $w(T)$ defined in Section \ref{BitabIntro}.

To see how a raising or lowering operator acts we perform the parenthesization procedure to this word:
$$ \textcolor{c1}{()))(} \, \textcolor{c2}{)())()} \,\textcolor{c3}{))()())} $$
in this example we see that every ``(" is matched with a ``)" so this is a Yamanouchi word, meaning it corresponds to a highest weight vector in our crystal (so we do not change a $2$ to a $1$ from here). If we want to apply a lowering operator, we see that the last $1$ corresponds to the rightmost unmatched ``)" so our lowering operator turns this bitableau into: 

$$
\bitableau{
            *(c1)\bi11 & *(c1) \bi11 & *(c1)\bi11 &  *(c1)\bi12 & *(c2) \bi21 & *(c3)\bi31 & *(c3) \boldbi 32 \\
            *(c1)\bi12 & *(c2) \bi21 & *(c2) \bi21 & *(c2) \bi22 & *(c3) \bi31 & *(c3) \bi32 \\
           *(c2) \bi21 & *(c2) \bi22 & *(c3) \bi31 & *(c3) \bi31 & *(c3) \bi32 
        }
    $$

    We'll  note that we could also have used the reading word $w'(T)$, which corresponds to using a different tensor product rule.  The crystal operators obtained in this way would be different, but the crystals overall would be isomorphic.

    \subsection{The Monomial Expansion of the Kronecker Product}

    Now that we have this construction, we are ready to restate and prove our combinatorial interpretation for the monomial expansion of the Kronecker product of Schur polynomials. 

    \begin{customtheorem}{2} The Kronecker product of two Schur functions can be expanded in the monomial basis as:
    \begin{align*}
        s_\lambda\ast s_\nu&=\sum_{\mu}d(\lambda,\mu,\nu)m_\mu.
    \end{align*}
    where $d(\lambda,\mu,\nu)$ counts the number of bitableaux $T$ of shape $\lambda$, with weights $a(T)= \mu$ and $b(T)= \nu$, and with Yamanouchi sort-by-top reading word $w(T)$. 
\end{customtheorem}

\begin{proof}
    The Kronecker product is defined as the graded dual of the Kronecker coproduct $\Lambda \to \Lambda \otimes \Lambda$ given by $p \to p[\mathbf{xy}]$. In the Schur basis, the rule is given by
    $$ s_\lambda[\mathbf{xy}] = \sum_{\tau, \nu}g(\lambda,\tau,\nu) s_\tau[\mathbf{x}]s_\nu[\mathbf{y}]. $$
If we interpret the $\mathbf{x}$ variables as the eigenvalues of a matrix in $GL(V)$ and the $\mathbf{y}$ variables as the eigenvalues of a matrix in $GL(W)$, then this is decomposing the character of $S^{\lambda}(V \otimes W)$ as a $GL(V) \times GL(W)$ representation.  

The monomial expansion for the Kronecker product corresponds to expanding the right-hand side of the Kronecker coproduct in terms of the monomial basis for the $\mathbf{x}$ variables, but still in the Schur basis for the $\mathbf{y}$ variables:

$$ s_\lambda[\mathbf{xy}] = \sum_{\mu, \nu} d(\lambda,\mu, \nu) m_\mu[\mathbf{x}]s_\nu[\mathbf{y}] $$
In terms of representation theory, the right hand side is decomposing $S^{\lambda}(V \otimes W)$ as a $T_V \times GL(W)$ representation, where $T_V \subset GL(V)$ is a maximal torus.

This is exactly what our crystal is doing: 
decomposing a $GL(V \otimes W) \cong GL_{nm}$  representation as a $GL(W) \cong GL_m$-representation which preserves the $GL(V) = GL_n$-weights. The coefficient of $m_\mu[\mathbf{x}]s_\nu[\mathbf{y}]$ in this expansion is equal to the multiplicity of $S^{\nu}(W)$ in the $GL_n$-weight space for $\mu$. This is equal to the number of $GL_m$ highest weight vectors of $GL_m$-weight $\nu$ and $GL_n$-weight $\mu$. Our crystal structure is defined using the sort-by-top reading word, and being a highest weight vector exactly corresponds to the reading word being Yamanouchi. 
\end{proof}

\section{Known cases}\label{sec:known}

In a handful of cases, Kronecker products are well-understood. We will show now that our framework generalizes many of these known cases in a uniform way, reinforcing our belief that this is a good place to look for a solution to the Kronecker coefficient problem.

\subsection{RSK and dual RSK crystals}

 To put RSK and dual RSK into our framework, we first introduce two specialized methods for extracting a reading word from the top entries of a tableau. Let
 \begin{itemize}
    \item $u(T)$ be the word obtained by reading (in the usual left to right bottom to top way) the top entries of boxes with bottom entry one, then the top entries of boxes with bottom entry two, and so on; and
    \item $u'(T)$ be the word obtained by similarly reading the top entries of boxes with the largest bottom entry, then the top entries of boxes with the second largest bottom entry, and so on.
\end{itemize}

Let \[\RSK:\BB_{(r)}\to \bigcup_{\lambda\vdash r}B(\lambda)\times B(\lambda)\] be the bijection that considers a one-row bitableau as a biword and performs RSK on it.

\begin{proposition}
    Let $T\in \BB_{(r)}$. Then,   \[\RSK(T)=(P(w(T)),P(u(T))).\]
\end{proposition}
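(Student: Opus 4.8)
The plan is to recognize a one-row bitableau as precisely a lexicographically sorted biword and then read off the two coordinates of $\RSK(T)$ separately, using the standard RSK symmetry to handle the recording tableau. Write $T \leftrightarrow \mathbf{w} = \left(\begin{smallmatrix} a_1 & \cdots & a_r \\ b_1 & \cdots & b_r\end{smallmatrix}\right)$ with the columns $(a_i,b_i)$ weakly increasing in lexicographic order, which is exactly the semistandardness condition on a single-row bitableau. By definition $\RSK(\mathbf{w}) = (P,Q)$, where $P$ is the insertion tableau of the word $b_1 \cdots b_r$.

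First I would dispatch the left coordinate. Since $\lambda=(r)$ has a single row and the columns are lex-sorted, grouping the boxes by top entry partitions them into consecutive blocks (all top entry $1$, then all top entry $2$, and so on), and within each block reading left to right is the natural order. Hence $w(T)$ is literally the bottom word $b_1 b_2 \cdots b_r$, so $P(w(T)) = P$ immediately.

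For the right coordinate the goal is $P(u(T)) = Q$, and the clean route is the symmetry property quoted earlier: if $\mathbf{w}'$ is obtained from $\mathbf{w}$ by exchanging rows and re-sorting lexicographically, then $\RSK(\mathbf{w}') = (Q,P)$. In particular $Q$ is the insertion tableau of the bottom row of $\mathbf{w}'$, so it suffices to show that this bottom row equals $u(T)$.

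This identification is the crux, and the main thing to verify carefully. The bottom row of $\mathbf{w}'$ consists of the top entries $a_i$ grouped by their bottom entry $b_i$ (since $\mathbf{w}'$ is sorted first on the former $b$'s), and within each group sorted by increasing $a_i$ (the secondary lexicographic key). On the other hand $u(T)$ lists the top entries of boxes grouped by bottom entry, and because the original $a_i$ are weakly increasing, reading any fixed-$b$ group left to right in $T$ already produces the $a$-values in weakly increasing order. Thus both words are the same multiset of top entries, partitioned the same way and sorted the same way within each block, so they coincide; combining the two coordinates yields $\RSK(T) = (P(w(T)), P(u(T)))$. The only real subtlety is keeping the two lexicographic keys straight when applying the symmetry, namely checking that the within-block order in $u(T)$ matches the secondary sort in $\mathbf{w}'$.
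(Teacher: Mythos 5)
Your proof is correct and takes exactly the same route as the paper, whose entire argument is the one-line observation that the claim ``follows immediately from the symmetry of the RSK algorithm upon exchanging the rows of a biword.'' You have simply spelled out the details the paper leaves implicit --- that $w(T)$ is the bottom word of the biword and that $u(T)$ agrees with the bottom row of the row-swapped, re-sorted biword --- and your verification of the within-block ordering is the right point to check.
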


\begin{proof}
    This result follows immediately from the symmetry of the RSK algorithm upon exchanging the rows of a biword.
\end{proof}

\begin{corollary}
    If $\BB_{(r)}$ is equipped with the crystal structures applying the parenthesization operations to the reading words $w(T)$ and $u(T)$, then $\RSK$ is an isomorphism of $\mathfrak{gl}_n\times\mathfrak{gl}_m$-crystals.
\end{corollary}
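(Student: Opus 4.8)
The plan is to verify that $\RSK$ intertwines the two crystal structures one factor at a time, exploiting the fact that in the target product crystal $B(\lambda)\times B(\lambda)$ the $\mathfrak{gl}_m$-operators act only on the first tableau and the $\mathfrak{gl}_n$-operators only on the second. By the previous proposition, $\RSK(T)=(P(w(T)),P(u(T)))$, where the first coordinate is a $\mathfrak{gl}_m$-crystal element (entries in $[m]$) and the second a $\mathfrak{gl}_n$-crystal element (entries in $[n]$). Since the bottom reading word $w(T)$ is literally the bottom row $b_1\cdots b_r$ of the biword, and the $\mathfrak{gl}_m$-crystal structure on $\BB_{(r)}$ is by definition the parenthesization crystal on this word, the $\mathfrak{gl}_m$-comparison reduces entirely to comparing word and tableau crystals through the insertion map $P(\cdot)$.

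First I would recall the standard fact that the insertion map $P(\cdot)\colon Word(k,r)\to\bigcup_{\lambda}B(\lambda)$ is a morphism of $\mathfrak{gl}_k$-crystals: the reading word of $P(w)$ is Knuth-equivalent to $w$, and the parenthesization operators are constant on Knuth classes (they descend to the plactic monoid), so $e_iP(w)=P(e_iw)$ and $f_iP(w)=P(f_iw)$. Applying this with $k=m$ to $w(T)$ handles the first coordinate immediately: if $T'=f_jT$ under the $\mathfrak{gl}_m$-structure, then $w(T')=f_j\,w(T)$ as words, whence $P(w(T'))=f_jP(w(T))$, and dually for $e_j$.

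The heart of the argument — and the step I expect to be the main obstacle — is showing that a $\mathfrak{gl}_m$-operator fixes the second coordinate $Q=P(u(T))$; equivalently, that $u(T)$ and $u(T')$ are Knuth-equivalent. Note that changing a single bottom entry (and re-sorting inside its constant-top block, which leaves the top word $a_1\cdots a_r$ literally unchanged) nonetheless alters $u(T)$, since $u$ regroups the top entries according to their bottom entries. The required invariance of the plactic class of $u(T)$ is precisely the \emph{coplacticity} of the crystal operators: acting on the insertion word of a biword changes the insertion tableau $P$ but preserves the recording tableau $Q$. I would invoke this as the known structural input — it is the statement that $\RSK$ is an isomorphism of bicrystals, with the bottom-row operators generating the $\mathfrak{gl}_m$-action on $P$ and the top-row operators the $\mathfrak{gl}_n$-action on $Q$ — rather than re-proving it from scratch.

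Finally, I would obtain the $\mathfrak{gl}_n$-side for free from the row-swapping symmetry of $\RSK$ already used in the previous proposition: the involution exchanging the top and bottom entries of a biword swaps $w\leftrightarrow u$, swaps $P\leftrightarrow Q$, and exchanges the two factors, so the analysis of the $\mathfrak{gl}_m$-structure transports verbatim to show the $\mathfrak{gl}_n$-operators (parenthesizing $u(T)$) act on $Q$ alone while fixing $P$. Since the $\mathfrak{gl}_m$-operators then affect only $P$ and the $\mathfrak{gl}_n$-operators only $Q$, the two families automatically commute, matching the product $\mathfrak{gl}_n\times\mathfrak{gl}_m$-crystal structure on each $B(\lambda)\times B(\lambda)$; combined with the bijectivity supplied by the previous proposition, this shows $\RSK$ is an isomorphism of $\mathfrak{gl}_n\times\mathfrak{gl}_m$-crystals.
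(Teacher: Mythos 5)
Your argument is correct and follows exactly the route the paper intends: the paper states this corollary without proof, treating it as immediate from the preceding proposition $\RSK(T)=(P(w(T)),P(u(T)))$ together with the standard facts you cite --- that insertion intertwines the word and tableau crystals (crystal operators descend to plactic classes), that the operators on the insertion word are coplactic and hence fix the recording tableau, and that the row-swap symmetry of $\RSK$ transports the $\mathfrak{gl}_m$-analysis to the $\mathfrak{gl}_n$-side. Your writeup simply makes these implicit ingredients explicit (including the check that applying an operator to $w(T)$ keeps the one-row bitableau lexicographic with the top word unchanged), so there is nothing to correct.
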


The analogous algorithm for the column shape turns out to be Burge insertion. A Burge word is like a biword except for two modifications: when two top entries coincide, the columns are sorted in \emph{decreasing} order by second entry, and no column may be repeated. Burge insertion is then carried out by row-inserting the second row of the Burge word, and recording the corresponding entries in the first row. This process results in a recording tableau that is row, rather than column, strict, so to define a map to a pair of semistandard tableaux, we end by taking the transpose of the recording tableau.

We define a map \[\bRSK:\BB_{(1^r)}\to\bigcup_{\lambda\vdash r} B(\lambda)\times B(\lambda')\] by first reading entries of the form $\bi1\ast$, then $\bi2\ast$, and so on from bottom to top to form a Burge word, then performing Burge insertion on this word.

\begin{example}
    Let \[T=\bitableau{
                \bi11\\
                \bi12\\
                \bi21\\
                \bi23\\
                \bi24\\
                \bi31\\
                \bi33
            }\in\BB_{(1^7)}.\] The corresponding Burge word is \begin{align*}
                \left(\bi12\bi11\bi24\bi23\bi21\bi33\bi31\right).
            \end{align*} Applying Burge insertion and transposing the second tableau, we get \begin{align*}
                \bRSK(T)=\left(\begin{ytableau}
                    1 & 1 & 1\\
                    2 & 3 & 3\\
                    4
                \end{ytableau},\begin{ytableau}
                    1 & 1 & 2\\
                    2 & 2\\
                    3 & 3
                \end{ytableau}\right).
            \end{align*}
\end{example}

We will need one more variant of RSK: dual RSK, which we write as \[\RSK'(w)=(P'(w),Q'(w))\] (See \cite[Appendix A]{fulton1997young}). We are only interested in leveraging relationships between dual RSK and Burge insertion, so for our purposes, it is only important that $\RSK'$ takes in a word and outputs a row-strict tableau (i.e. one whose transpose is semistandard).

\begin{proposition}
    Let $T\in \BB_{(1^r)}$. Then,   \[\bRSK(T)=(P(w(T)),P(u'(T))).\]
\end{proposition}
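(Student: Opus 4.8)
The plan is to mirror the proof of the row-shape proposition, isolating the two output tableaux separately. First I would dispose of the insertion tableau. By construction the Burge word attached to $T$ has top row equal to the weakly increasing sequence of top entries of $T$ and bottom row equal to $w(T)$: reading the bottom entries of the boxes with top entry $1$ (bottom to top, hence in decreasing order of bottom entry, matching the Burge decreasing-within-ties convention), then those with top entry $2$, and so on, is exactly the definition of $w(T)$. Since Burge insertion row-inserts this bottom word using ordinary row insertion, the first component of $\bRSK(T)$ is literally $P(w(T))$. This settles the first coordinate with essentially no work.

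The substance is the recording tableau. Write $Q_{rs}$ for the row-strict recording tableau, so that the second component of $\bRSK(T)$ is $Q=Q_{rs}^{t}$. Here I would invoke the relationship between Burge insertion and dual RSK: exchanging the two rows of the Burge word and re-sorting identifies $Q_{rs}$ with a dual-RSK insertion tableau built from the top entries of $T$. Tracking the orders carefully, the word that gets dual-inserted is the reverse $\overleftarrow{u'(T)}$ of the sort-by-bottom (largest first) top-reading word; that the \emph{largest} bottom entry is read first, so that $u'$ rather than $u$ appears, is forced by the Burge decreasing-within-ties convention. I would then apply the standard identity relating dual and ordinary insertion under word reversal, namely $P'(\overleftarrow{x})^{t}=P(x)$, with $x=u'(T)$. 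Combining these gives
\[ Q=Q_{rs}^{t}=P'(\overleftarrow{u'(T)})^{t}=P(u'(T)), \]
which is the claim.

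The main obstacle is the middle step, the symmetry identifying $Q_{rs}$ with a dual insertion tableau. Unlike ordinary RSK, Burge insertion does \emph{not} enjoy a naive row-exchange symmetry: swapping the two rows of the Burge word and re-running Burge insertion does not simply interchange $P$ and $Q$ (the resulting shapes need not even match), precisely because the insertion is ordinary while the recording is row-strict. This is exactly why the dual correspondence $\RSK'$ must be brought in: the row-strict recording tableau of Burge insertion is governed by dual insertion, not by Burge insertion itself. So the delicate points are to state the Burge/dual-RSK symmetry in the correct form and to verify that the reading word it produces is $\overleftarrow{u'(T)}$ with the descending grouping, after which the reversal identity $P'(\overleftarrow{x})^{t}=P(x)$ finishes the argument. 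I would expect the cleanest write-up to establish the symmetry at the level of matrices, reading $T$ as the support of its zero-one matrix, and to handle the reversal and transpose bookkeeping once and for all there.
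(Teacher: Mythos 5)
Your proposal is correct and follows essentially the same route as the paper's proof: the first coordinate is immediate because the Burge word attached to $T$ has bottom row $w(T)$, and the second coordinate is obtained exactly as you describe, by exchanging the rows of the Burge word to identify the row-strict recording tableau with $P'(rev(u'(T)))$ (the paper cites \cite[p.~200]{fulton1997young} for this Burge/dual-RSK symmetry) and then applying the reversal identity $P'(rev(x))^t = P(x)$ (the paper's citation of \cite[Prop.~2.3.14]{butler1994subgroup}). The only difference is presentational: where you propose verifying the row-exchange symmetry at the level of zero-one matrices, the paper simply cites Fulton's Appendix A.
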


\begin{proof}
    First, consider the Burge word $\left(\begin{matrix}
        \ast\\w(T)
    \end{matrix}\right)$ obtained by reading the entries of $T$ by their top entry, low to high, and let $\bRSK(T)=(P,Q)$. By definition, $P=P(w(T))$.

    Note that if we swap the rows of this Burge word and sort it into a biword, we get something of the form $\left(\begin{matrix}
        \ast\\rev(u'(T))
    \end{matrix}\right)$. By \cite[p.~200]{fulton1997young}, we have that $P'(rev(u'(T)))=Q^t$. Finally, by \cite[Prop.~2.3.14]{butler1994subgroup}, we have that \begin{align*}
        P'(rev(u'(T)))&=Q^t\\
        &\Updownarrow\\
        P(rev(rev(u'(T))))&=(Q^t)^t\\
        &\Updownarrow\\
        P(u'(T))&=Q.
    \end{align*}

\end{proof}

\begin{corollary}
    If $\BB_{(1^r)}$ is equipped with the crystal structures applying the parenthesization operations to the reading words $w(T)$ and $u'(T)$, then $\bRSK$ is an isomorphism of $\mathfrak{gl}_n\times\mathfrak{gl}_m$-crystals.
\end{corollary}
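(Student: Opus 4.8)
The plan is to show that $\bRSK$ is a weight-preserving bijection that separately intertwines the $\mathfrak{gl}_m$- and $\mathfrak{gl}_n$-crystal operators with the standard operators on the target, and then to conclude that it is an isomorphism of $\mathfrak{gl}_n\times\mathfrak{gl}_m$-crystals. First I would fix the crystal structure on the codomain: on $\bigcup_{\lambda\vdash r}B(\lambda)\times B(\lambda')$ the $\mathfrak{gl}_m$-operators act on the first factor $B(\lambda)$ (whose entries lie in $[m]$) and the $\mathfrak{gl}_n$-operators act on the second factor $B(\lambda')$ (whose entries lie in $[n]$); since these act on independent tensor factors they automatically commute, so the codomain is a $\mathfrak{gl}_n\times\mathfrak{gl}_m$-crystal. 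That $\bRSK$ is a bijection is built into its definition, as Burge insertion is invertible and bitableaux of shape $(1^r)$ are in bijection with Burge words; it is weight-preserving because the $b$-weight of $T$ is the content of $P=P(w(T))$ and the $a$-weight of $T$ is the content of $Q=P(u'(T))$.

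The one external ingredient I would invoke is the standard fact that Robinson--Schensted insertion is a morphism of crystals: for any word $v$ the parenthesization operators satisfy $P(e_iv)=e_iP(v)$ and $P(f_iv)=f_iP(v)$ (with the convention that $P$ sends $0$ to $0$), because the reading word of $P(v)$ is Knuth-equivalent to $v$ and Knuth-equivalent words occupy the same position in the same crystal component; see \cite{bump2017crystal}. Combined with the preceding proposition, which identifies $\bRSK(T)=(P(w(T)),P(u'(T)))$, this reduces the corollary to two intertwining statements.

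For the $\mathfrak{gl}_m$-structure I would argue as follows. The operator $f_i$ on $\BB_{(1^r)}$ is computed by parenthesizing $w(T)$, so it acts on $w(T)$ exactly as the word-crystal operator, $w(f_iT)=f_i\,w(T)$. Applying the morphism property, the first component of $\bRSK(f_iT)$ is $P(f_i\,w(T))=f_i\,P(w(T))$, which is precisely $f_i$ acting on the first factor. It then remains to check that the second component is unchanged, i.e. $P(u'(f_iT))=P(u'(T))=Q$. The symmetric argument, carried out through $u'(T)$, handles the $\mathfrak{gl}_n$-structure. Putting the two together shows $\bRSK$ commutes with all crystal operators, and being a weight-preserving bijection it is then an isomorphism of $\mathfrak{gl}_n\times\mathfrak{gl}_m$-crystals.

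The main obstacle is exactly this independence claim: that the $\mathfrak{gl}_m$-operator (which alters only bottom entries) leaves the $\mathfrak{gl}_n$-recording tableau $Q=P(u'(T))$ fixed, and dually that the $\mathfrak{gl}_n$-operator preserves $P=P(w(T))$. This is the Burge-insertion analogue of the classical statement that RSK is an isomorphism of $\mathfrak{gl}_n\times\mathfrak{gl}_m$-bicrystals on biwords, where the two families of operators on the top and bottom rows commute and are carried to the independent actions on $(P,Q)$. I would establish it by transporting that bicrystal commutation across the dictionary between Burge insertion and dual RSK set up in the preceding proposition---the identity $P'(\mathrm{rev}(u'(T)))=Q^t$ together with the reversal and transpose compatibilities from \cite{fulton1997young}---thereby reducing the commutation for $\bRSK$ to the already-known commutation for ordinary RSK. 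Verifying that the two reading-word operators genuinely lift to commuting, well-defined operators on $\BB_{(1^r)}$ (in particular that changing an entry and re-sorting keeps us inside $\BB_{(1^r)}$) is the part that will need the most care.
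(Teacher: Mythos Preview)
The paper does not give a proof of this corollary; it is stated immediately after the proposition and left implicit. Your plan is essentially the intended argument and is correct: the proposition identifies $\bRSK(T)$ with $(P(w(T)),P(u'(T)))$, and the standard fact that row insertion $P$ is a crystal morphism on words handles each factor separately. The independence claim you single out as the ``main obstacle''---that the $\mathfrak{gl}_m$-operator on $w(T)$ leaves $P(u'(T))$ fixed and vice versa---is exactly the nontrivial content not already contained in the proposition, and the paper simply does not spell it out. Your proposed resolution, reducing to the known $\mathfrak{gl}_n\times\mathfrak{gl}_m$-bicrystal structure on biwords (equivalently on $0$--$1$ matrices via Burge/dual RSK), is the standard and correct route. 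Once that commutation is in hand, the well-definedness of the $u'$-operators on $\BB_{(1^r)}$ and their commutation with the $w$-operators follow automatically by pulling back the codomain structure along the bijection $\bRSK$, so the careful by-hand verification you mention at the end is not actually needed.
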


The perspective in this section suggests a strategy: find a way to extract a second reading word from a bitableau in a way compatible with $w(T)$.

\medskip

\noindent \textbf{Open Problem:} \emph{Generalize $u(T)$ and $u'(T)$ to a reading method for all shapes that gives a $\mathfrak{gl}_n$-crystal structure commuting with the $\mathfrak{gl}_m$-crystal structure afforded by $w(T)$.}

\subsection*{An Insertion Version of the Kronecker Coefficient Problem}

Above we used the connection between crystals and RSK insertion to put RSK and dual RSK into our crystal-based approach to the Kronecker coefficient problem. It is also possible to remove all mentions of crystals entirely and formulate a version of the Kronecker coefficient problem entirely in terms of RSK insertion.\\

\noindent \textbf{Open Problem:} \emph{Find an insertion algorithm that takes as input a bitableaux $B$ and outputs a pair of tableaux $(T,T')$ of potentially different shapes such that:}

\begin{itemize}
\item \emph{The map preserves the weights: $w(T) = a(B)$ and $w(T') = b(B)$.}

\item \emph{Given any three partitions $\lambda, \mu, \nu$. All pairs of tableaux of shapes $(\mu, \nu)$ have the same number of preimages of shape $\lambda$.}
\end{itemize}

We note that this is actually slightly weaker than the crystal formulation, but still enough to get a combinatorial interpretation of the Kronecker coefficients. Given such an insertion algorithm, the Kronecker coefficient $g(\lambda,\mu,\nu)$ will be the number of times a pair of tableaux of shapes $(\mu,\nu)$ occurs as output with input of shape $\lambda$.

\subsection{Kronecker Tableaux}

In \cite{ballantine2005combinatorial}, the authors introduce Kronecker tableaux in order to give a combinatorial interpretation for Kronecker coefficients involving a two-part shape $(n-p,p)$ and a partition $\lambda\vdash n$ whenever $\lambda_1\geq 2p-1$ or $\ell(\lambda)\geq 2p-1$. We now give a definition of their Kronecker tableaux cast in the language of bitableaux.

First, let $\BB'_\lambda(2,m)$ be the subset consisting of $T\in\BB_\lambda(2,m)$ with $w'(T)$ Yamanouchi (that is, the highest-weight vectors in the corresponding $\mathfrak{gl}_m$-crystal).

\begin{definition}\label{def:kronecker_tab}
    A Kronecker tableau of shape $\lambda$ is a lexicographic bitableau $T\in\BB'_\lambda(2,m)$ satisfying the following condition where $\alpha=(\alpha_1,\alpha_2,\ldots)$ is the shape formed by the boxes of $T$ with first entry one: \begin{itemize}
        \item[(I)] $\alpha_1=\alpha_2$, or
        \item[(II)] $\alpha_1>\alpha_2$ and any one of the following conditions are satisfied: \begin{itemize}
            \item[(i)] The number of \;$\bitableau{\bi21}$ in the second row of $T$ is exactly $\alpha_1-\alpha_2$.
            \item[(ii)] The number of \;$\bitableau{\bi22}$ in the first row of $T$ is exactly $\alpha_1-\alpha_2$.
        \end{itemize}
    \end{itemize}
\end{definition} 

\begin{remark}
    To obtain a Kronecker tableau as described in \cite{ballantine2005combinatorial} from a bitableau as in Definition \ref{def:kronecker_tab}, remove all boxes of the form $\bitableau{\bi1a}$ and remove the upper twos from the remaining boxes.
\end{remark}

In this language, they interpret the Kronecker coefficient $g(\lambda,(n-p,p),\nu)$ when $\lambda_1\geq 2p-1$ as the number of Kronecker tableaux $T$ of shape $\lambda$ with $a(T)=(p,n-p)$ and $b(T)=\nu$.

\newcommand{\gena}{*(c1)\bi{1}{*}}
\newcommand{\genb}{*(c2)\bi{2}{*}}
\newcommand{\genc}{*(c3)\bi{2}{*}}

In the remainder of this section, we explain how the characterization in Definition \ref{def:kronecker_tab} is explained by a map $\varphi:\BB'_\lambda(2,m)\to\BB'_\lambda(2,m)\cup\{0\}$ that lowers the $a$-weight, which we conjecture can be completed to a lowering operation for a $\mathfrak{gl}_2$-crystal structure on $\BB'_\lambda(2,m)$. That is, Kronecker tableaux when $\lambda_1\geq 2p-1$ appear to be highest-weight with respect to our $\mathfrak{gl}_m$-crystal and lowest-weight with respect to a $\mathfrak{gl}_2$-crystal, which we partially describe here.

Given a tableau $T\in \BB'_\lambda(2,m)$ such as \begin{align*}
    T=\bitableau{\gena&\gena&\gena&\gena&\bi{1}{a}&\genb&\genb&\genb&\genb\\
    \gena&\gena&\gena&\genc&\genc&\genc&\genc&\genc\\
    \gena&\gena&\genc&\genc\\
    \genc&\genc&\genc&\genc}
\end{align*} we obtain a new tableau $T'$ by changing the right-most $\bitableau{\bi1a}$ in the first row to a $\bitableau{\bi2a}$. \begin{align*}
    T'=\bitableau{\gena&\gena&\gena&\gena&\bi{2}{a}&\genb&\genb&\genb&\genb\\
    \gena&\gena&\gena&\genc&\genc&\genc&\genc&\genc\\
    \gena&\gena&\genc&\genc\\
    \genc&\genc&\genc&\genc}.
\end{align*} Let \[\varphi(T)=\begin{cases}
    T' & T'\in\BB'_\lambda(2,m)\\
    0 & \text{otherwise}
\end{cases}.\]

\begin{proposition}
    For $T\in\BB'_\lambda(2,m)$, we have that $\varphi(T)=0$ if and only if $T$ is a Kronecker tableau.
\end{proposition}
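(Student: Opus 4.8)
The plan is to unpack the meaning of $\varphi(T)=0$: by construction this says that the tableau $T'$, obtained from $T$ by changing the right-most box of the first row with top entry one (say box $(1,\alpha_1)$) to have top entry two, fails to lie in $\BB'_\lambda(2,m)$. There are exactly two ways this can happen, and I would treat them separately: (A) $T'$ is no longer a lexicographic bitableau, or (B) $T'$ is lexicographic but its sort-by-top reading word $w'(T')$ is no longer Yamanouchi. First I would record the structural consequences of the hypothesis $T\in\BB'_\lambda(2,m)$. Since $w'(T)$ ends in the bottom entry of $(1,\alpha_1)$, the Yamanouchi condition forces that bottom entry to be $1$, and then the lexicographic row condition forces every first-row box with top entry one to be $(1,1)$. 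A similar argument for the pairs $(i,i+1)$ with $i\ge 2$---using that in $w'(T)$ the top-one entries of rows $\ge 2$ are followed only by the $1$'s coming from the first row---forces every second-row box with top entry one to be $(1,2)$. In particular the box being modified is $(1,1)\mapsto(2,1)$.

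Next I would handle failure mode (A). Changing $(1,\alpha_1)$ to $(2,1)$ can only break semistandardness against the box directly below it, since its right neighbor has bottom entry $\ge 1$. If $\alpha_1=\alpha_2$ the box below has top entry one, so column strictness fails immediately; this is case (I). If $\alpha_1>\alpha_2$ the box below has top entry two, and strictness fails exactly when that box is $(2,1)$. The key elementary observation is that the number $k$ of $(2,1)$-boxes in the second row satisfies $k\le \alpha_1-\alpha_2$: each sits in a column whose first-row entry is $<_{\mathrm{lex}}(2,1)$, hence has top entry one and is counted by $\alpha_1$. Combining this bound with the requirement that $(2,\alpha_1)$ itself be a $(2,1)$-box shows that failure (A) happens if and only if $k=\alpha_1-\alpha_2$, which is precisely condition (II)(i).

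The heart of the argument---and the step I expect to be the main obstacle---is failure mode (B). Passing from $w'(T)$ to $w'(T')$ amounts to moving a single letter $1$ from the end of the word to the position just after the block of top-two entries of rows $\ge 2$. I would argue that this move can introduce a Yamanouchi violation only for the pair $(1,2)$ and only among the suffixes lying to the right of that block. Writing $R,Q,S$ for the first-row top-two entries, the lower-row top-one entries, and the first-row top-one entries respectively, the structural facts above show that the subword of $1$'s and $2$'s occurring in $RQS$ is exactly $1^{r_1}2^{r_2+\alpha_2}1^{\alpha_1}$, where $r_2$ is the number of $(2,2)$-boxes in the first row and all entries $\ge 3$ are irrelevant to this pair. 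A direct computation then gives that the minimum of $\#1-\#2$ over these suffixes equals $\alpha_1-\alpha_2-r_2$; this is $\ge 0$ because $w'(T)$ is Yamanouchi, and it vanishes---creating the violation---exactly when $r_2=\alpha_1-\alpha_2$, which is condition (II)(ii).

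Assembling the pieces, $\varphi(T)=0$ if and only if failure mode (A) or (B) occurs, i.e. if and only if $\alpha_1=\alpha_2$, or $\alpha_1>\alpha_2$ together with $k=\alpha_1-\alpha_2$ or $r_2=\alpha_1-\alpha_2$; this is exactly the definition of a Kronecker tableau. The delicate point is the analysis of mode (B): verifying that moving the letter endangers only the pair $(1,2)$ on the right-hand suffixes, and that the entries $\ge 3$---which really do appear once $\lambda$ has three or more rows---drop out of the count, so that the whole question reduces to the single inequality $r_2\le\alpha_1-\alpha_2$ with equality governing the failure. I would also check the degenerate cases ($\alpha_1=0$, or the box $(2,\alpha_1)$ not existing) separately; these are consistent with the convention that $\varphi(T)=0$ whenever there is no box available to modify.
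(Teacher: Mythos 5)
Your proposal is correct and takes essentially the same route as the paper's proof: the same decomposition of $\varphi(T)=0$ into failure of semistandardness (yielding conditions (I) and (II)(i) via the box below the modified one) versus failure of the Yamanouchi condition on $w'(T')$ (yielding (II)(ii)), with your subwords $R,Q,S$ corresponding to the paper's $b$, $p$, $a$ and your criterion $r_2=\alpha_1-\alpha_2$ matching the paper's $m_2(b)<\alpha_1-\alpha_2$ threshold. The only distinction is that you make explicit some facts the paper asserts tersely---that Yamanouchi-ness of $w'(T)$ forces the top-one boxes of row $i$ to have entry $(1,i)$, which underlies the paper's claims $m_1(p)=\alpha_1-1$ and $m_2(p)=\alpha_2$, and that only the pair $(1,2)$ on suffixes right of the moved letter can be violated---so no new ideas are needed beyond what the paper uses.
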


\begin{proof}

First, note that in order for $T'$ to be a bitableau at all, we must have that $\alpha_1>\alpha_2$ (i.e. condition (I) must not be met).

Now we split the reading word $w'(T)$ into subwords as follows. Let \begin{itemize}
    \item $p$ be the subword of the bottom of entries $\bi{1}{\ast}$,
    \item $b$ be the subword of the bottom of the entries $\bi{2}{\ast}$ in the first row, and
    \item $y$ be the subword of the bottom of the remaining entries $\bi{2}{\ast}$
\end{itemize} (highlighted in the above tableau $T$ as pink, blue, and yellow boxes respectively).

Assume that $w'(T)=ybpa$ is Yamanouchi. Note that we must then have $a=1$. In order for $T'$ to be column-strict, we must have that the box below the white box is not $\bitableau{\bi21}$ . Hence, the number of instances of $\bitableau{\bi21}$ in the second row must be strictly less than $\alpha_1-\alpha_2$ (i.e. condition (II)(i) must not be met).

We have that $\varphi(T)\neq0$ only if $w'(T')=y1bp$ is also Yamanouchi. The only way this new word fails to be Yamanouchi is if there is now a suffix with more twos than ones. Because $b$ is an increasing sequence and $p$ is a decreasing sequence, this means we need \begin{align*}
    m_2(b)+m_2(p)\leq m_1(p)
\end{align*} where $m_i(w)$ is the multiplicity of the letter $i$ in the word $w$. Note that because $w'(T)$ is Yamanouchi, $m_1(p)=\alpha_1-1$ and $m_2(p)=\alpha_2$. The above condition then becomes \begin{align*}
    m_2(b)<\alpha_1-\alpha_2.
\end{align*}

That is, condition (II)(ii) must not be met.

Hence, $\varphi(T)\in\BB'_\lambda(2,m)$ exactly when one of the conditions of $T$ being a Kronecker tableau fails.
\end{proof}

By \cite[Corollary 3.3]{ballantine2005combinatorial}, the Kronecker coefficient $g(\lambda,(n-p,p),\nu)$ when $\lambda_1<2p-1$ is bounded above by the number of Kronecker tableaux of shape $\lambda$ with $a(T)=(p,n-p)$ and $b(T)=\nu$. This supports the conjecture that $\varphi$ can be completed to a lowering operation $f^1$ that picks out which Kronecker tableaux should be considered lowest weight. 

\begin{example} The Kronecker coefficient $g((4,3),(4,3),(3,2,2))=1$, but there are two Kronecker tableaux of shape $(4,3)$, $a$-weight $(3,4)$ and $b$-weight $(3,2,2)$ shown below.
    \[\begin{array}{cc}
        \bitableau{\bi11\amp\bi11\amp\bi21\amp\bi22\\
                   \bi12\amp\bi23\amp\bi23} & \bitableau{\bi11\amp\bi11\amp\bi22\amp\bi23\\
                   \bi12\amp\bi21\amp\bi23}
    \end{array}\]

    One of these two tableaux should then not be lowest weight and should instead transform via $f^1$ into the following unique tableau of shape $(4,3)$ with $a$-weight $(2,5)$, $b$-weight $(3,2,2)$ and Yamanouchi reading word $w'(T)$. \[\bitableau{\bi11\amp\bi11\amp\bi22\amp\bi22\\
                   \bi21\amp\bi23\amp\bi23}\]
\end{example}

\vspace{.08in}

\noindent \textbf{Open Problem:} \emph{ Complete this $\mathfrak{gl}_2$-crystal structure to finish the interpretation of Kronecker coefficients with one partition having two parts.}

\begin{remark}
    The case of a two-part partition fits well into our framework because the set of two-part $a$-weights is naturally closed under the $\mathfrak{gl}_2$ raising and lowering operations. While combinatorial interpretations for Kronecker coefficients in the case of one hook shape are known (\cite{blasiak2016Kronecker,liu2017simplified}), they generally set the content of their tableaux to a hook shape. Because hook contents are not closed under a natural set of crystal operations, we expect that our crystal framework would better handle this case if one were to fix the \emph{shape} of the tableaux to a hook.
\end{remark}

\section{Examples} \label{sec:examples}

In this section, we give examples of what these bicrystals on bitableaux could look like for shapes other than a row or column.

\subsection{Maximum entry two}

First, we will look at some $\mathfrak{gl}_2 \times \mathfrak{gl}_2$-crystals on sets of bitableaux with entries in $[2] \times [2]$.  The second copy of $\mathfrak{gl}_2$ acts on the bottom weights, and we will use the crystal structure in Section \ref{sec:glm-crystal} with the lowering operators drawn as horizontal arrows.  

The crystal structure for first copy of $\mathfrak{gl}_2$ is then only partially determined by the condition that the two structures commute. The tableaux whose positions are determined by these constraints are given within the crystal structure while the undetermined pieces are given separately. We encourage the reader to cut out and paste these undetermined pieces at the appropriate weights to form their own conjectured bitableaux crystals.

\begin{example} The portion of the shape $(2,2)$ crystal that is determined by the constraints is shown below.

\vspace{.1in}

\adjustbox{scale=.5,center}{\begin{tikzcd}
         {\bitableau{\blankbox\amp\blankbox\\\blankbox\amp\blankbox}}    & {\bitableau{\bi11\amp\bi11\\\bi21\amp\bi21}} \arrow[r]           & {\bitableau{\bi11\amp\bi11\\\bi21\amp\bi22}} \arrow[r]           & {\bitableau{\bi11\amp\bi11\\\bi22\amp\bi22}} \arrow[r] & {\bitableau{\bi11\amp\bi12\\\bi22\amp\bi22}} \arrow[r] & {\bitableau{\bi12\amp\bi12\\\bi22\amp\bi22}} \\
{\bitableau{\bi11\amp\bi11\\\bi12\amp\bi12}} \arrow[d] & {\bitableau{\bi11\amp\bi11\\\bi12\amp\bi21}} \arrow[r] \arrow[d] & {\bitableau{\bi11\amp\bi12\\\bi12\amp\bi21}} \arrow[r] \arrow[d] & {\bitableau{\bi11\amp\bi12\\\bi12\amp\bi22}} \arrow[d] &              &    \\
{\bitableau{\bi11\amp\bi11\\\bi12\amp\bi22}} \arrow[d] & {\bitableau{\bi11\amp\bi12\\\bi21\amp\bi21}} \arrow[d] \arrow[r] & {\bitableau{\bi11\amp\bi12\\\bi21\amp\bi22}} \arrow[r] \arrow[d] & {\bitableau{\bi12\amp\bi12\\\bi21\amp\bi22}} \arrow[d] &              &    \\
{\bitableau{\blankbox\amp\blankbox\\\blankbox\amp\blankbox}} \arrow[d] & {\bitableau{\bi11\amp\bi21\\\bi21\amp\bi22}} \arrow[r]           & {\bitableau{\bi12\amp\bi21\\\bi21\amp\bi22}} \arrow[r]           & {\bitableau{\bi12\amp\bi21\\\bi22\amp\bi22}}           &              &    \\
{\bitableau{\bi11\amp\bi21\\\bi22\amp\bi22}} \arrow[d]           &                        &                        &              &              & \\
{\bitableau{\bi21\amp\bi21\\\bi22\amp\bi22}} & & & & &
\end{tikzcd}}

\vspace{.1in}
    
\noindent The undetermined gray pieces are filled with the following two tableaux.

\vspace{.1in}

\adjustbox{scale=.5,center}{\begin{tabular}{cc}
     \begin{tikzpicture}
\node[rectangle,minimum width=.5in] (m) {\begin{minipage}{.58in}\bitableau{\bi12\amp\bi12\\\bi21\amp\bi21}\end{minipage}};
\draw[dashed] (m.south west) rectangle (m.north east);
\node[rotate=90] at (.35in,0) {\ding{34}};
\end{tikzpicture} & \begin{tikzpicture}
\node[rectangle,minimum width=.5in] (m) {\begin{minipage}{.58in}\bitableau{\bi11\amp\bi21\\\bi12\amp\bi22}\end{minipage}};
\draw[dashed] (m.south west) rectangle (m.north east);
\node[rotate=90] at (.35in,0) {\ding{34}};
\end{tikzpicture}
\end{tabular}}
Either way of placing them into the gray slots yields a valid $\mathfrak{gl}_2 \times \mathfrak{gl}_2$-crystal.  We believe one way looks more natural than the other, but we will try not to bias you.  A hypothetical solution to the crystal version of the Kronecker coefficient problem would pick one of them out in a consistent manner.

\end{example}

\begin{example} The portion of the shape $(3,1)$ crystal that is determined by the constraints is shown below.

\vspace{.1in}

\adjustbox{scale=.5,center}{%
\begin{tikzcd}
{\bitableau{\blankbox\amp\blankbox\amp\blankbox\\\blankbox}} \arrow[r] \arrow[d] & {\bitableau{\blankbox\amp\blankbox\amp\blankbox\\\blankbox}} \arrow[r] \arrow[d] & {\bitableau{\blankbox\amp\blankbox\amp\blankbox\\\blankbox}} \arrow[d]           &                        & {\bitableau{\bi11\amp\bi12\amp\bi21\\\bi12}} \arrow[d] &  & {\bitableau{\bi11\amp\bi11\amp\bi11\\\bi12}} \arrow[d] \arrow[r] & {\bitableau{\bi11\amp\bi11\amp\bi12\\\bi12}} \arrow[r] \arrow[d] & {\bitableau{\bi11\amp\bi12\amp\bi12\\\bi12}} \arrow[d] \\
{\bitableau{\blankbox\amp\blankbox\amp\blankbox\\\blankbox}} \arrow[d] \arrow[r]          & {\bitableau{\blankbox\amp\blankbox\amp\blankbox\\\blankbox}} \arrow[d] \arrow[r]          & {\bitableau{\blankbox\amp\blankbox\amp\blankbox\\\blankbox}} \arrow[d]           &                        & {\bitableau{\bi12\amp\bi12\amp\bi21\\\bi21}} \arrow[d] &  & {\bitableau{\blankbox\amp\blankbox\amp\blankbox\\\blankbox}} \arrow[d] \arrow[r] & {\bitableau{\blankbox\amp\blankbox\amp\blankbox\\\blankbox}} \arrow[r] \arrow[d] & {\bitableau{\blankbox\amp\blankbox\amp\blankbox\\\blankbox}} \arrow[d] \\
{\bitableau{\blankbox\amp\blankbox\amp\blankbox\\\blankbox}} \arrow[r]           & {\bitableau{\blankbox\amp\blankbox\amp\blankbox\\\blankbox}} \arrow[r]           & {\bitableau{\blankbox\amp\blankbox\amp\blankbox\\\blankbox}}                     &                        & {\bitableau{\bi12\amp\bi21\amp\bi21\\\bi22}}           &  & {\bitableau{\blankbox\amp\blankbox\amp\blankbox\\\blankbox}} \arrow[d] \arrow[r] & {\bitableau{\blankbox\amp\blankbox\amp\blankbox\\\blankbox}} \arrow[r] \arrow[d] & {\bitableau{\blankbox\amp\blankbox\amp\blankbox\\\blankbox}} \arrow[d] \\
                       &                        &                        &                        &              &  & {\bitableau{\blankbox\amp\blankbox\amp\blankbox\\\blankbox}} \arrow[d] \arrow[r] & {\bitableau{\blankbox\amp\blankbox\amp\blankbox\\\blankbox}} \arrow[r] \arrow[d] & {\bitableau{\blankbox\amp\blankbox\amp\blankbox\\\blankbox}} \arrow[d] \\
{\bitableau{\blankbox\amp\blankbox\amp\blankbox\\\blankbox}} \arrow[r]           & {\bitableau{\blankbox\amp\blankbox\amp\blankbox\\\blankbox}} \arrow[r]           & {\bitableau{\blankbox\amp\blankbox\amp\blankbox\\\blankbox}}                     &                        &              &  & {\bitableau{\bi21\amp\bi21\amp\bi21\\\bi22}} \arrow[r]           & {\bitableau{\bi21\amp\bi21\amp\bi22\\\bi22}} \arrow[r]           & {\bitableau{\bi21\amp\bi22\amp\bi22\\\bi22}}           \\
                       &                        &                        &                        &              &  &                        &                        &              \\
{\bitableau{\bi11\amp\bi11\amp\bi11\\\bi21}} \arrow[r] \arrow[d] & {\bitableau{\bi11\amp\bi11\amp\bi11\\\bi22}} \arrow[r] \arrow[d] & {\bitableau{\bi11\amp\bi11\amp\bi12\\\bi22}} \arrow[r] \arrow[d] & {\bitableau{\bi11\amp\bi12\amp\bi12\\\bi22}} \arrow[r] \arrow[d] & {\bitableau{\bi12\amp\bi12\amp\bi12\\\bi22}} \arrow[d] &  &                        &                        &              \\
{\bitableau{\bi11\amp\bi11\amp\bi21\\\bi21}} \arrow[r] \arrow[d] & {\bitableau{\bi11\amp\bi11\amp\bi22\\\bi21}} \arrow[r] \arrow[d] & {\bitableau{\bi11\amp\bi11\amp\bi22\\\bi22}} \arrow[r] \arrow[d] & {\bitableau{\bi11\amp\bi12\amp\bi22\\\bi22}} \arrow[r] \arrow[d] & {\bitableau{\bi12\amp\bi12\amp\bi22\\\bi22}} \arrow[d] &  &                        &                        &              \\
{\bitableau{\bi11\amp\bi21\amp\bi21\\\bi21}} \arrow[r]           & {\bitableau{\bi11\amp\bi21\amp\bi22\\\bi21}} \arrow[r]           & {\bitableau{\bi11\amp\bi22\amp\bi22\\\bi21}} \arrow[r]           & {\bitableau{\bi11\amp\bi22\amp\bi22\\\bi22}} \arrow[r]           & {\bitableau{\bi12\amp\bi22\amp\bi22\\\bi22}}           &  &                        &                        &             
\end{tikzcd}}

\vspace{.1in}

\noindent The undetermined gray pieces are filled with the following components arranged by $a$-weight.

\vspace{.1in}

\newcommand{\cutout}[1]{\begin{tikzpicture}
\node[rectangle,minimum width=.5in,ampersand replacement=\&] (m) {\begin{minipage}{3.65in}
#1
\end{minipage}};
\draw[dashed] (m.south west) rectangle (m.north east);
\node[rotate=90] at (1.88in,0in) {\ding{34}};
% Margin
\path (current bounding box.south west) +(0,-.2) (current bounding box.north east) +(0,.2);
\end{tikzpicture}}

\adjustbox{scale=.5,center}{\begin{tabular}{c|c|c}
$a$-weight $(3,1)$ & $a$-weight $(2,2)$ & $a$-weight $(1,3)$\\ \hline
     \cutout{\begin{tikzcd}
    \bitableau{\bi11\amp\bi11\amp\bi21\\\bi12} \arrow[r] \& \bitableau{\bi11\amp\bi11\amp\bi22\\\bi12} \arrow[r] \& \bitableau{\bi11\amp\bi12\amp\bi22\\\bi12}   
\end{tikzcd}}
 &      \cutout{\begin{tikzcd}
    \bitableau{\bi11\amp\bi21\amp\bi21\\\bi12} \arrow[r] \& \bitableau{\bi11\amp\bi21\amp\bi22\\\bi12} \arrow[r] \& \bitableau{\bi11\amp\bi22\amp\bi22\\\bi12}   
\end{tikzcd}} &      \cutout{\begin{tikzcd}
    \bitableau{\bi11\amp\bi21\amp\bi21\\\bi22} \arrow[r] \& \bitableau{\bi11\amp\bi21\amp\bi22\\\bi22} \arrow[r] \& \bitableau{\bi12\amp\bi21\amp\bi22\\\bi22}   
\end{tikzcd}}\\
\cutout{\begin{tikzcd}
    \bitableau{\bi11\amp\bi11\amp\bi12\\\bi21} \arrow[r] \& \bitableau{\bi11\amp\bi12\amp\bi12\\\bi21} \arrow[r] \& \bitableau{\bi12\amp\bi12\amp\bi12\\\bi21}   
\end{tikzcd}}
 &      \cutout{\begin{tikzcd}
    \bitableau{\bi11\amp\bi12\amp\bi21\\\bi21} \arrow[r] \& \bitableau{\bi11\amp\bi12\amp\bi22\\\bi21} \arrow[r] \& \bitableau{\bi12\amp\bi12\amp\bi22\\\bi21}   
\end{tikzcd}} &      \cutout{\begin{tikzcd}
    \bitableau{\bi12\amp\bi21\amp\bi21\\\bi21} \arrow[r] \& \bitableau{\bi12\amp\bi21\amp\bi22\\\bi21} \arrow[r] \& \bitableau{\bi12\amp\bi22\amp\bi22\\\bi21}   
\end{tikzcd}}\\
 &      \cutout{\begin{tikzcd}
    \bitableau{\bi11\amp\bi11\amp\bi21\\\bi22} \arrow[r] \& \bitableau{\bi11\amp\bi12\amp\bi21\\\bi22} \arrow[r] \& \bitableau{\bi12\amp\bi12\amp\bi21\\\bi22}   
\end{tikzcd}} &    
\end{tabular}}

As before, any way of inserting these segments into the empty gray slots of the same weight will yield a valid $\mathfrak{gl}_2 \times \mathfrak{gl}_2$-crystal.

\end{example}

\subsection{Maximum entry three}

This time we will allow the top entries to go up to $3$ instead of $2$.  These examples tend to get big fast, so instead of showing the full crystal we will just look at those bitableaux where the sort-by-top reading word $w(T)$ is Yamanouchi of a fixed weight -- in this case $(2,1)$.  We note that in general it would be enough to construct a $\mathfrak{gl}_n$-crystal structure on the set of bitableaux of fixed weight with Yamanouchi $w(T)$ -- the rest of the crystal is then determined because the $\mathfrak{gl}_n$ and $\mathfrak{gl}_m$-crystal structures commute. 

For the following example we made some additional assumptions about how this $\mathfrak{gl}_n$-crystal might be determined by a reading word $v(T)$, namely:

\begin{enumerate}
    \item The order in which the top entries are read by $v$ depends only on the bottom entries.
    \item The output of the reading method $v$ on the bitableaux must be the row-reading word of some semistandard Young tableau\footnote{ The first assumption is a property that we hope these crystals will have in general, the second assumption is to limit the number of possible reading orders.}.
\end{enumerate}

\begin{example}\label{ex:sh21} For shape $(2,1)$ and fixed $b$-content $(2,1)$, there are three possible ways for the bottom entries to be distributed. To satisfy the conditions above, when the bottom 2 is in the eastern-most box or southern-most box, the top entries have to be read as follows:

\adjustbox{scale=.5,center}{$\begin{array}{ccc}
     \bitableau{\bi{\mathbf{\textcolor{c4}{(3)}}}1\amp\bi{\mathbf{\textcolor{c4}{(2)}}}2\\\bi{\mathbf{\textcolor{c4}{(1)}}}1} & \bitableau{\bi{\mathbf{\textcolor{c4}{(1)}}}1\amp\bi{\mathbf{\textcolor{c4}{(3)}}}1\\\bi{\mathbf{\textcolor{c4}{(2)}}}2}
\end{array}$}

\vspace{.1in}

When the bottom 2 is in the corner, there are two possible ways for the top entries to be read, with the corner always being read second. We choose the following reading order for illustrative purposes.

\vspace{.1in}

\adjustbox{scale=.5,center}{$\begin{array}{c}\bitableau{\bi{\mathbf{\textcolor{c4}{(2)}}}2\amp\bi{\mathbf{\textcolor{c4}{(3)}}}1\\\bi{\mathbf{\textcolor{c4}{(1)}}}1}\end{array}$}

\vspace{.1in}

This reading order yields the following crystal (and changing to the other reading order only swaps the tableaux in the center column of the last component).

\adjustbox{scale=.5,center}{
\begin{tikzcd}
{\bitableau{\bi11\amp\bi11\\\bi12}} \arrow[r,"1",c1] & {\bitableau{\bi11\amp\bi21\\\bi12}} \arrow[r,"1",c1] \arrow[d,"2",c2] & {\bitableau{\bi11\amp\bi21\\\bi22}} \arrow[r,"1",c1] \arrow[d,"2",c2] & {\bitableau{\bi21\amp\bi21\\\bi22}} \arrow[d,"2",c2] \\
             & {\bitableau{\bi11\amp\bi31\\\bi12}} \arrow[r,"1",c1]           & {\bitableau{\bi11\amp\bi31\\\bi22}} \arrow[r,"1",c1] \arrow[d,"2",c2] & {\bitableau{\bi21\amp\bi31\\\bi22}} \arrow[d,"2",c2] \\
             &                        & {\bitableau{\bi11\amp\bi31\\\bi32}} \arrow[r,"1",c1]           & {\bitableau{\bi21\amp\bi31\\\bi32}} \arrow[d,"2",c2] \\
            \bitableau{\bi11\amp\bi22\\\bi31} &                        &                        & {\bitableau{\bi31\amp\bi31\\\bi32}}          
\end{tikzcd}}

\vspace{.1in}

\adjustbox{scale=.5,center}{
\begin{tikzcd}
                         & {\bitableau{\bi12\amp\bi21\\\bi21}} \arrow[r,"2",c2] & {\bitableau{\bi12\amp\bi31\\\bi21}} \arrow[r,"2",c2] & {\bitableau{\bi12\amp\bi31\\\bi31}} \arrow[rd,"1",c1] &    \\
{\bitableau{\bi11\amp\bi12\\\bi21}} \arrow[ru,"1",c1] \arrow[rd,"2",c2] &              &              &               & {\bitableau{\bi22\amp\bi31\\\bi31}} \\
                         & {\bitableau{\bi11\amp\bi12\\\bi31}} \arrow[r,"1",c1] & {\bitableau{\bi12\amp\bi21\\\bi31}} \arrow[r,"1",c1] & {\bitableau{\bi21\amp\bi22\\\bi31}} \arrow[ru,"2",c2] &   
\end{tikzcd}}
So far we are unable to generalize this method of extracting a reading word for bitableaux of arbitrary shape, but we do think it is suggestive of what a general rule might look like.
    
\end{example}

\bibliography{bibliography}
\bibliographystyle{alpha}

\end{document}